\newtheorem{theorem}{Theorem}[section]
\newtheorem{corollary}[theorem]{Corollary}
\newtheorem{lemma}[theorem]{Lemma}
\newtheorem{remark}[theorem]{Remark}
\numberwithin{equation}{subsection}
\newtheorem{thmA}{Theorem}
\newtheorem*{ack}{Acknowledgements}
\begin{document}

\title{Minimal presentation, finite quotients and lower central series of cactus groups}

\author{Hugo Chemin}
\author{Neha Nanda}
\address{Laboratoire de Mathématiques Nicolas Oresme UMR CNRS 6139
, Université de Caen Normandie, 14000 Caen, France}
\email{hugo.chemin@unicaen.fr}
\email{nehananda94@gmail.com}

\subjclass[2020]{20F55, 20F36, 57K12, 20F10}
\keywords{Cactus group, braid group, finite quotient, dihedral group, lower central series}

\begin{abstract}
This article deals with the study  of cactus groups from a combinatorial point of view. These groups have been gaining prominence lately in various domains of mathematics, amongst which are their relations with well-known groups such as braid groups, diagram groups, to name a few. We compute a minimal presentation for cactus groups in terms of generators and non-redundant relations.  We also construct homomorphisms of these groups onto certain finite groups, which leads to results about finite quotients of cactus groups. More precisely, we prove that all (infinite) dihedral groups appear as quotients of cactus groups. We also investigate the lower central series and its consecutive quotients. While there are already known established similarities with braid groups, we deduce a considerable disparity between the two groups.
\end{abstract}

\maketitle

\section{Introduction}\label{Introduction}
The cactus group $J_n$ first appeared in the works of Devadoss \cite{MR1718078} and Davis-Januszkiewicz-Scott \cite{MR1985196} under the name of \textit{quasibraid groups} and \textit{mock reflection groups}, respectively. The group $J_n$ is  the fundamental group of the quotient orbifold of $\overline{M}^{n+1}_0(\mathbb{R})$, the Deligne-Knudson-Mumford moduli space of stable real curves of genus $0$ with $n + 1$ marked points, by the action of symmetric group $S_n$ that permutes the first $n$ of those points. The picture of stable real curves in this space, which looks like an Opuntia cactus, hints at why this group is given the name cactus group.\\
The term \textit{quasibraid} in Devadoss' work is due to the resemblance of cactus groups with Artin braid groups, through the machinery of cyclic operads of mosaics which corresponds to cactus groups, just as cube operads correspond to braid groups.  One way to study the space $\overline{M}^{n+1}_0(\mathbb{R})$ is through the iterated blow-ups of braid hyperplane arrangements, which suggests a noteworthy analogy with the pure braid group \cite{MR1985196}. As for braided monoidal categories, coboundary categories have cactus groups which acts on multiple tensor products of objects \cite{MR2219257}. Coboundary categories are then used to study the crystals of reductive Lie algebras of finite dimension and the representations of coboundary Hopf algebras. The cactus group also acts on standard tableaux via the \textit{Schüzenberger involution} which may be recovered as a monodromy action of the cactus group on the simultaneous spectrum of the Gaudin Hamiltonians \cite{MR3845719}. These groups also appear in the literature in the context of hives and octahedron recurrences \cite{MR2261754, MR2097327}, and are used as a tool in representation theory \cite{MR3567259, MR4118639, MR3945740}. \\
Formally, the cactus group $J_n$ is generated by $\{ \sigma_{p,q}, ~ 1 \leq p < q \leq n \}$ with defining relations:
\begin{eqnarray}\label{Canonical-Presentation}
\sigma_{p,q}^{2} &=&1 \hspace*{5mm} \textrm{for } 1 \leq p < q \leq n,\\ 
\sigma_{p,q}\sigma_{r,s} &=& \sigma_{r,s}\sigma_{p,q} \hspace*{5mm} \textrm{for } [p,q] \cap [r,s]= \emptyset,\\
\sigma_{p,q}\sigma_{r,s} &=& \sigma_{p+q-s,p+q-r}\sigma_{p,q} \hspace*{5mm} \textrm{for } [r,s] \subset [p,q].
\end{eqnarray}

Here, $[p,q]=\{p, p+1, \dots , q-1, q\}$. There is a surjective homomorphism of $J_n$ onto the symmetric group $S_n$ given by:
$$\begin{array}{cccccccc}
\pi & : & J_n & \to & S_n \\
& & \sigma_{p,q} & \mapsto & s_{p,q},
\end{array}$$
where $s_{p,q}$ is the permutation in $S_n$ given by: 
$$s_{p,q}(i)=\left\{\begin{array}l
p+q-i \ \ \text{if} \ \ i \in [p,q],\\
i \ \ \text{otherwise.}
\end{array}\right.$$
The kernel of this homomorphism is called the \textit{pure cactus group} of order $n$.\\
From a group-theoretic point of view, the relation of cactus groups with other well-known groups beside braid groups has recently been studied too. It has been shown that the pure cactus group embeds into the diagram group, which is a right-angled Coxeter group, hence it is residually nilpotent \cite{MR3988817}. Bellingeri-Chemin-Lebed \cite{bellingeri2022cactus} explored connections of these groups with Mostovoy's Gauss diagram groups and right-angled Coxeter groups, in particular, they showed that the twin groups inject into the cactus groups. They also proved that the word problem for cactus group is solvable, and the triviality of the center of (pure) cactus group, non-existence of odd torsion in cactus groups, and that pure cactus groups are torsion free. Very recently, the linearity of generalised cactus groups was proved which is constructed by replacing the symmetric group associated with them by a Coxeter group \cite{MR4629741}. These groups are also investigated from a geometric point of view in \cite{genevois2022cactus}.\\

In this paper, we investigate algebraic aspects of cactus groups by first determining a minimal presentation in Section  \ref{Minimal-Presentation-Section}. With the convention that $\sigma_i:=\sigma_{1,i}$ for $i= 2, 3, \dots, n$, we prove the following result. 
\begin{thmA}\label{Minimal-Presentation-Cactus-Rewritten}
The cactus group $J_n$ is generated by  $\{ \sigma_{i} ~|~ i= 2, 3, \dots, n \}$ subject to the following relations:
\begin{eqnarray}
\sigma_{i}^{2} &=&1 \hspace*{2mm} \textrm{for } 2 \leq i \leq n, \label{Relation-One}\\ 
(\sigma_{k}\sigma_{i}\sigma_{k}\sigma_{j})^2&=&1 \hspace*{2mm} \textrm{for } 4 \leq i+j \leq k \leq n, ~~ 2 \leq i \leq j, \label{Relation-Two} \\
\sigma_{k}\sigma_{i+j}\sigma_{j}\sigma_{i+j} &=& \sigma_{k-i}\sigma_{j}\sigma_{k-i}\sigma_{k} \hspace*{2mm} \textrm{for } 3 \leq i+j < k \leq n,~1 \leq i, ~2 \leq j, ~i+j \leq k-i.\label{Relation-Three}
\end{eqnarray}
Further, this presentation is minimal in terms of number of generators. 
\end{thmA}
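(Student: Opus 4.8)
My strategy is to realize the abstractly presented group as $J_n$ via a pair of mutually inverse homomorphisms, and to obtain the generator count from the abelianization. Throughout I write $\sigma_{p,q}=\sigma_q\,\sigma_{q-p+1}\,\sigma_q$, the identity obtained by applying the canonical nesting relation to the outer interval $[1,q]$ and the inner interval $[1,q-p+1]\subset[1,q]$; since $2\le q-p+1\le q$ whenever $p<q$, this exhibits every canonical generator $\sigma_{p,q}$ as a word in $\sigma_2,\dots,\sigma_n$, which already proves that these $n-1$ elements generate $J_n$. Let $G$ denote the group presented by the generators $\sigma_2,\dots,\sigma_n$ and the relations \eqref{Relation-One}--\eqref{Relation-Three}.

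For the easy direction I would decode the three relation families inside $J_n$. Using $\sigma_q\sigma_i\sigma_q=\sigma_{q-i+1,\,q}$ (the generation identity with $p=q-i+1$), relation \eqref{Relation-One} is the involution $\sigma_{1,i}^2=1$; relation \eqref{Relation-Two} becomes $(\sigma_{k-i+1,k}\,\sigma_{1,j})^2=1$, which, since $i+j\le k$ forces $[1,j]$ and $[k-i+1,k]$ to be disjoint, is exactly the commutation of two disjoint canonical generators; and relation \eqref{Relation-Three} rewrites as $\sigma_{1,k}\,\sigma_{i+1,\,i+j}=\sigma_{k-i-j+1,\,k-i}\,\sigma_{1,k}$, namely the nesting relation for the sub-interval $[i+1,i+j]\subset[1,k]$. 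As all three therefore hold in $J_n$, the assignment $\sigma_i\mapsto\sigma_{1,i}$ defines a homomorphism $\phi\colon G\to J_n$.

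The substance of the proof is the reverse homomorphism $\psi\colon J_n\to G$ defined on the canonical generators by $\sigma_{p,q}\mapsto\sigma_q\sigma_{q-p+1}\sigma_q$; its well-definedness amounts to verifying that every canonical relation holds in $G$ after this substitution. The involution relations are immediate from \eqref{Relation-One}. The crucial lemma is the reflection rule $\sigma_{1,k}\,\sigma_{r,s}\,\sigma_{1,k}=\sigma_{1+k-s,\,1+k-r}$, valid in $G$ for every $[r,s]\subset[1,k]$: for $r\ge 2$ with $[r,s]$ in the left half it is exactly \eqref{Relation-Three}, the right-half case is deduced from it by conjugating by $\sigma_{1,k}$, and the case $r=1$ is immediate from the defining word $\sigma_{1,k}\sigma_{1,s}\sigma_{1,k}=\sigma_k\sigma_s\sigma_k$. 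Granting this rule, the general nesting relation follows by expanding conjugation by $\sigma_{p,q}=\sigma_q\sigma_{q-p+1}\sigma_q$ as a composite of three anchored reflections (reflect $[r,s]$ first in $[1,q]$, then in $[1,q-p+1]$, then again in $[1,q]$), which one checks lands on $[p+q-s,\,p+q-r]$; and the general commutation relation for disjoint $[p,q]$ and $[r,s]$ follows by conjugating the whole configuration by $\sigma_{1,N}$ with $N=\max(q,s)$, reflecting it to a pair one of whose intervals is anchored at $1$ and is thus covered by \eqref{Relation-Two} (reflecting once more, if needed, to meet the normalization $i\le j$). The main obstacle I anticipate is exactly this bookkeeping --- confirming that the index ranges imposed in \eqref{Relation-Two}--\eqref{Relation-Three} are wide enough that every canonical relation is reached by these reflections. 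Once $\psi$ is well defined, $\phi$ and $\psi$ are mutually inverse on generators, using the generation identity in $J_n$ and relation \eqref{Relation-One} in $G$, so $G\cong J_n$ and \eqref{Relation-One}--\eqref{Relation-Three} is indeed a presentation of $J_n$.

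Finally, for minimality in the number of generators I would bound $J_n$ below by an abelian quotient. Sending each $\sigma_{p,q}$ to the basis vector $e_{q-p}$ of $(\mathbb{Z}/2\mathbb{Z})^{n-1}$ indexed by the length $q-p\in\{1,\dots,n-1\}$ respects all canonical relations --- involutions give $2e_{q-p}=0$, the commutation relations are automatic, and nesting gives $e_{q-p}+e_{s-r}=e_{s-r}+e_{q-p}$ --- so it defines a surjection $J_n\twoheadrightarrow(\mathbb{Z}/2\mathbb{Z})^{n-1}$. Since $(\mathbb{Z}/2\mathbb{Z})^{n-1}$ cannot be generated by fewer than $n-1$ elements, neither can $J_n$; as the bound is attained by $\sigma_2,\dots,\sigma_n$, the presentation is minimal in the number of generators.
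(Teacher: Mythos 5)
Your argument is correct and is in substance the same as the paper's: both rest on the identity $\sigma_{p,q}=\sigma_{1,q}\sigma_{1,q-p+1}\sigma_{1,q}$, the observation that relations (\ref{Relation-One})--(\ref{Relation-Three}) are exactly the anchored instances of the canonical relations, the reduction of arbitrary canonical relations to anchored ones by conjugation, and a surjection of $J_n/[J_n,J_n]$ onto $\mathbb{Z}_2^{n-1}$ (Lemma \ref{lemma1}) for minimality of the generating set. The only difference is organizational: you package the equivalence as a pair of mutually inverse homomorphisms built around a single ``reflection rule'' lemma, whereas the paper first prunes the canonical relation set (Lemma \ref{Reducing-Relations-Lemma}) and then carries out the corresponding word manipulations as Tietze transformations.
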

One of the similarities between cactus and braid groups is their interpretation in terms of intertwined strings on the plane with distinct crossings giving information about the group structure. It is natural to compare their algebraic properties. The natural surjection of the braid group $B_n$ onto the symmetric group can be translated in the case of cactus group, as mentioned previously, which leads to the symmetric group being one of the finite quotient of cactus group.  Thus, it is interesting to find  non-Abelian and non-cyclic homomorphisms onto finite groups (other than the symmetric group) which allows us to study finite quotients of these groups. So far the best known non-cyclic quotient of $B_n$ is $S_n$ which is conjectured by Margalit to be the smallest such quotient \cite{MR4117579}. Very recently, with some obvious exceptions, Kolay \cite{MR4634752} proved that if $G$ is a non-cyclic quotient of $B_n$, then either the order of group $G$ is greater than $n!$, or $G=S_n$. The question of finding finite quotients (or homomorphisms onto finite groups) has been broadened to the setting of mapping class groups of surfaces of finite genus \cite{MR4118626, MR2981051}, the commutator subgroup of braid groups \cite{MR4117579, MR4396925}, (unrestricted) virtual and welded braid groups \cite{MR4510845, MR4625994}, surface braid groups \cite{MR4592549,tan2023smallest}, to name a few.  One of the motivations for studying finite homomorphisms and finite quotients is to distinguish finitely-presented groups via their quotients. In the realm of $3$-dimensional
topology, one relevant question is whether the set of finite quotients of a finitely-generated residually-finite group determines the group itself, up to isomorphism. In more formal terminology, the aim is to obtain a comprehensive understanding of which finitely-generated residually-finite
groups have isomorphic \textit{profinite completions}. We refer to the survey \cite{MR3966805} for more details.\\

In Section \ref{Homomorphisms-Section}, we explore the possible finite quotients of cactus groups. The strategy is to construct explicit homomorphisms onto certain groups, and we obtain the following result.

\begin{thmA}\label{Main-Theorem-Quotients}
All the dihedral groups and the infinite dihedral group are non-trivial quotients of the cactus groups. In particular, the dihedral group $D_4$ is the smallest non-Abelian quotient of the group $J_n$, $n \geq 3$ after $S_3$. Further, there is no upper bound on the order of finite quotients of cactus groups.
\end{thmA}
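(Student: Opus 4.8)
The plan is to deduce the whole statement from the single isomorphism $J_3\cong D_\infty$, and to settle the minimality claim by a short analysis of which small groups can be generated by involutions. The first thing I would do is specialise Theorem~\ref{Minimal-Presentation-Cactus-Rewritten} to $n=3$. No relation of type \eqref{Relation-Two} or \eqref{Relation-Three} can then occur, since both require an index $k$ with $i+j\le k$ and $i+j\ge 3$, which is incompatible with $k\le 3$. Hence only \eqref{Relation-One} survives and
\[
J_3=\langle \sigma_2,\sigma_3 \mid \sigma_2^{2}=\sigma_3^{2}=1\rangle\cong(\mathbb{Z}/2\mathbb{Z})\ast(\mathbb{Z}/2\mathbb{Z})\cong D_\infty,
\]
the infinite dihedral group. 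For each finite $m$ I would send $\sigma_2,\sigma_3$ to the two standard reflections that generate $D_m$ (their product being a rotation of order $m$); this respects the only relations $\sigma_2^{2}=\sigma_3^{2}=1$ and is surjective because two such reflections generate $D_m$, while the identity realises $D_\infty$. This yields surjections $J_3\twoheadrightarrow D_m$ for every $m$ and $J_3\twoheadrightarrow D_\infty$, so all finite dihedral groups and the infinite dihedral group occur as quotients of a cactus group. As $|D_m|=2m\to\infty$, the orders of the finite quotients are unbounded, which gives the last assertion.

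For the minimality statement I would use the key structural fact that every generator $\sigma_{p,q}$ of $J_n$ satisfies $\sigma_{p,q}^{2}=1$, so any quotient of $J_n$ is generated by involutions (the images of the $\sigma_{p,q}$, some possibly trivial). A non-abelian group has order at least $6$, and the only non-abelian group of order $6$ is $S_3\cong D_3$, which is realised by the construction above; hence $S_3$ is the smallest non-abelian quotient. There is no non-abelian group of order $7$, and the two non-abelian groups of order $8$ are $D_4$ and $Q_8$. Since $D_4$ already appears as a quotient of $J_3$, it only remains to rule out $Q_8$, and here the involution hypothesis is decisive: the unique involution of $Q_8$ is its central element, so any group generated by involutions that surjects onto $Q_8$ would have image inside a cyclic group of order $2$, which is abelian. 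Thus $Q_8$ is not a quotient of any $J_n$, and $D_4$ is indeed the smallest non-abelian quotient after $S_3$.

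I expect the main obstacle to lie not in exhibiting the surjections onto dihedral groups, which become routine once $J_3\cong D_\infty$ is extracted from Theorem~\ref{Minimal-Presentation-Cactus-Rewritten}, but in the minimality argument: the crucial step is the exclusion of $Q_8$, which is precisely where the observation that all cactus quotients are generated by involutions does the work, together with the verification that no non-abelian group of order strictly between $6$ and $8$ exists. I would present these order-by-order checks explicitly so that the phrase ``smallest non-abelian quotient after $S_3$'' is pinned down unambiguously.
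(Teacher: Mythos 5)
Your reduction to $J_3\cong\mathbb{Z}_2\ast\mathbb{Z}_2$ is correct as far as it goes (the specialisation of Theorem~\ref{Minimal-Presentation-Cactus-Rewritten} to $n=3$ does kill relations \eqref{Relation-Two} and \eqref{Relation-Three}, and $D_\infty$ surjects onto every $D_m$), but there is a genuine gap: everything you prove is a statement about quotients of $J_3$ only, whereas the theorem asserts, for \emph{every} $n\geq 3$, that all dihedral groups are quotients of $J_n$ and that $D_4$ is the smallest non-Abelian quotient of $J_n$ after $S_3$. To transfer your $J_3$-quotients to $J_n$ you would need a surjection $J_n\twoheadrightarrow J_3$; the natural inclusion goes the wrong way, and the existence of a strand-deletion epimorphism $q_n:J_n\to J_{n-1}$ (sending $\sigma_2\mapsto 1$ and $\sigma_i\mapsto\sigma_{i-1}$ otherwise) is a nontrivial claim that must itself be checked against relations \eqref{Relation-Two} and \eqref{Relation-Three}. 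The paper avoids this reduction entirely and instead spends the whole of Section~\ref{Homomorphisms-Section} constructing explicit homomorphisms $J_n\to D_4$, $J_n\to D_8$ and $J_n\to\mathbb{Z}_2\ast\mathbb{Z}_2$ for every $n$ (Theorems~\ref{Map-to-D4}, \ref{Map-to-D8}, \ref{Map-to-Dinfty}); these maps send $\sigma_i$ to $1$ for $i<\lfloor\frac{n+1}{2}\rfloor$ and to carefully chosen words for large $i$, and the case-by-case verification of the relations is the real content of the proof. Your proposal replaces all of that by a single unproven assertion, so as written it only establishes the theorem for $n=3$.

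On the other hand, your minimality argument is correct and is in fact \emph{more} than the paper records: the paper's proof of Theorem~\ref{Main-Theorem-Quotients} merely cites the three homomorphism theorems and never explains why no non-Abelian group of order strictly between $6$ and $8$, nor $Q_8$, can occur. Your observation that every quotient of $J_n$ is generated by involutions, so that $Q_8$ (whose unique involution is central) can only receive maps with image in $\mathbb{Z}_2$, is exactly the missing justification for the word ``smallest''. Keep that part; what needs to be added is either the verification that $q_n$ is a well-defined surjection $J_n\to J_{n-1}$, or direct homomorphisms $J_n\to D_\infty$ and $J_n\to D_4$ for all $n$ as in the paper.
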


The infinite dihedral group $\mathbb{Z}_2 \ast \mathbb{Z}_2$ is ``universal" in the sense that if we have a homomorphism from $J_n$ onto a dihedral group $D_m$, then it factors through $\mathbb{Z}_2 \ast \mathbb{Z}_2$. This also leads us to conclude that in the same sense, the cactus groups are closer to the right-angled Coxeter groups than to the braid groups.\\
In Section \ref{Lower-Central-Quotient-Section}, we investigate the lower central series of cactus groups $\{\Gamma_i(J_n)\}_{i \in \mathbb{N}}$ by constructing suitable homomorphisms onto finite groups with long lower central series. The quotients groups of lower central series are important group invariants which are interesting to explore. Cactus groups have long central series compared with that of braid groups whose second and third term coincide. A detailed account of the lower central series of the braid groups and their relatives may be found in \cite{darne2022lower}. In our case we deduce the following.

\begin{thmA}\label{Lower-Central-Series:DoNotStop/Factors}
For all $n \geq 3$,  the lower central series of the group $J_n$ does not stop. Furthermore, 
\begin{itemize}
 \item [(i)] $\Gamma_2(J_n)/\Gamma_3(J_n) \cong \mathbb{Z}_2^{\left\lfloor \frac{n}{2} \right\rfloor}$ and
 \item [(ii)] $\Gamma_3(J_n)/\Gamma_4(J_n) \cong \mathbb{Z}_2^{2\lfloor\frac{n}{2}\rfloor-1}.$
 \end{itemize}
\end{thmA}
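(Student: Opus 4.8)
The plan is to pass to the associated graded Lie ring $\mathfrak{g}=\bigoplus_{k\ge 1}\Gamma_k(J_n)/\Gamma_{k+1}(J_n)$ with the commutator bracket, and to read off its first three graded pieces, which are exactly $J_n^{\mathrm{ab}}$, $\Gamma_2/\Gamma_3$ and $\Gamma_3/\Gamma_4$. Writing $x_i$ for the image of $\sigma_i$ in $\mathfrak g_1$, abelianising the presentation of Theorem~\ref{Minimal-Presentation-Cactus-Rewritten} shows that relations \eqref{Relation-Two} and \eqref{Relation-Three} become trivial, so $\mathfrak g_1=J_n^{\mathrm{ab}}\cong\mathbb Z_2^{\,n-1}$ and $\mathfrak g$ is generated by the involutions $x_2,\dots,x_n$. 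Moreover $\sigma_i^2=1$ gives $[\sigma_i,y]^{\sigma_i}=[\sigma_i,y]^{-1}$, which together with centrality modulo the next term forces every homogeneous bracket to be $2$-torsion; hence each $\mathfrak g_k$ is an $\mathbb F_2$-vector space, matching the claimed elementary abelian structure. For the non-stopping statement I would invoke Theorem~\ref{Main-Theorem-Quotients}: for $n=3$ the presentation has no relations \eqref{Relation-Two}, \eqref{Relation-Three}, so $J_3=\langle\sigma_2,\sigma_3\mid\sigma_2^2=\sigma_3^2=1\rangle\cong\mathbb Z_2\ast\mathbb Z_2$, whose lower central series is $\Gamma_k=\langle(\sigma_2\sigma_3)^{2^{k-1}}\rangle$ for $k\ge 2$, strictly decreasing with all quotients $\mathbb Z_2$; and for general $n\ge 3$ a surjection $J_n\twoheadrightarrow\mathbb Z_2\ast\mathbb Z_2$ induces surjections $\Gamma_k(J_n)\twoheadrightarrow\Gamma_k(\mathbb Z_2\ast\mathbb Z_2)$, so the series of $J_n$ cannot stabilise.

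For the two isomorphisms I would separate the upper and lower bounds. The upper bounds come from reading off the low-degree symbols of the defining relations via the Magnus homomorphism $\sigma_i\mapsto 1+t_i$ into $\mathbb F_2\langle\langle t_2,\dots,t_n\rangle\rangle/(t_i^2)$, whose lowest-degree term computes the image of a bracket in $\mathfrak g$. A direct expansion shows that relation \eqref{Relation-Two} contributes $[x_i,x_j]=0$ whenever $i+j\le n$, while relation \eqref{Relation-Three} contributes
\begin{equation*}
[x_k,x_j]+[x_{i+j},x_j]+[x_{k-i},x_j]=0 .
\end{equation*}
Combining the two — the first kills all brackets of index-sum at most $n$, the second telescopes the survivors — leaves the spanning set $\{[x_m,x_{m+1}]:\lceil n/2\rceil\le m\le n-1\}$ of size $\lfloor n/2\rfloor$ for $\mathfrak g_2$, so $\dim_{\mathbb F_2}\mathfrak g_2\le\lfloor n/2\rfloor$. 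For $\mathfrak g_3$ I would proceed identically: span by weight-$3$ brackets, reduce using the Jacobi identity, the $2$-torsion relations and the degree-$3$ symbols of \eqref{Relation-Two} and \eqref{Relation-Three}, and count the survivors, which I expect to collapse to $2\lfloor n/2\rfloor-1$ classes.

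For the lower bounds I would construct explicit homomorphisms of $J_n$ onto finite $2$-groups that detect these classes independently. For $\mathfrak g_2$, using the dihedral quotients supplied by Theorem~\ref{Main-Theorem-Quotients}, I would map each consecutive pair $\sigma_m,\sigma_{m+1}$ (for $\lceil n/2\rceil\le m\le n-1$) to the two reflections generating a copy of $D_4$, so that $[x_m,x_{m+1}]$ is sent to the nontrivial central element $(\sigma_m\sigma_{m+1})^2$, and assemble these into a single homomorphism onto a product of copies of $D_4$ realising all $\lfloor n/2\rfloor$ classes as a linearly independent set; this forces $\dim_{\mathbb F_2}\mathfrak g_2\ge\lfloor n/2\rfloor$ and hence $\Gamma_2/\Gamma_3\cong\mathbb Z_2^{\lfloor n/2\rfloor}$. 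For $\mathfrak g_3$ the same philosophy applies, but with target groups replaced by suitable nilpotent $2$-groups of class $3$ whose degree-$3$ lower central quotient is large enough to certify independence of the $2\lfloor n/2\rfloor-1$ weight-$3$ classes.

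The main obstacle is part~(ii), the degree-$3$ computation. Unlike degree $2$, here one must simultaneously control the Jacobi relations, the extra identifications forced by the $2$-torsion of $\mathfrak g_2$ (which make several a priori distinct weight-$3$ brackets coincide), and the degree-$3$ symbols of \eqref{Relation-Two} and \eqref{Relation-Three}, and then verify that \emph{no further} relations occur — this last point is what pins the dimension to exactly $2\lfloor n/2\rfloor-1$ rather than merely bounding it from above. Equally delicate is the matching lower bound: one must exhibit enough homomorphisms onto class-$3$ nilpotent $2$-groups to guarantee that all $2\lfloor n/2\rfloor-1$ classes are linearly independent in $\Gamma_3/\Gamma_4$, and checking that each such assignment of the generators respects relations \eqref{Relation-Two} and \eqref{Relation-Three} is the most calculation-heavy step of the argument.
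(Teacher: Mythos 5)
Your overall architecture (kill brackets using the leading terms of relations \eqref{Relation-Two} and \eqref{Relation-Three} for the upper bound, detect the survivors with finite $2$-group quotients for the lower bound, and get non-stopping from a surjection onto $\mathbb{Z}_2 \ast \mathbb{Z}_2$) is the same as the paper's, and the non-stopping part and the degree-$2$ upper bound are essentially correct. But there are two genuine gaps. First, your lower-bound mechanism for (i) does not work as described: there is no homomorphism $J_n \to D_4$ sending one consecutive pair $\sigma_m,\sigma_{m+1}$ to the two reflections and all other generators to $1$, so you cannot ``assemble'' one copy of $D_4$ per pair into a product that separates the $\lfloor n/2\rfloor$ classes. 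Concretely, in $J_6$ with $m=3$, relation \eqref{Relation-Three} with $(i,j,k)=(1,3,6)$ reads $\sigma_6\sigma_4\sigma_3\sigma_4=\sigma_5\sigma_3\sigma_5\sigma_6$, and the assignment $\sigma_3\mapsto a$, $\sigma_4\mapsto b$, $\sigma_5,\sigma_6\mapsto 1$ sends the left side to $bab$ and the right side to $a$. The only $D_4$-quotient available (Theorem \ref{Map-to-D4}) sends \emph{every} $[\sigma_{m+1},\sigma_m]$ with $m\geq\lfloor\frac{n+1}{2}\rfloor$ to the \emph{same} element $(ab)^2$, so by itself it proves nontriviality but not linear independence. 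The missing idea is the paper's use of the (nontrivial, imported from Bellingeri--Chemin--Lebed) fact that the canonical inclusion $J_n\hookrightarrow J_{2i_m}$ is injective: inside the larger group all but the top commutator $[\sigma_{i_m+1},\sigma_{i_m}]$ become trivial modulo $\Gamma_3$ by Corollary \ref{Trivial-Two-Commutators-I}, while the top one survives in the $D_4$-quotient of $J_{2i_m}$; this peels off the classes one at a time and is what actually forces independence.

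Second, part (ii) is not proved: you state that you ``expect'' the weight-$3$ classes to collapse to $2\lfloor n/2\rfloor-1$ and that one ``must exhibit'' enough class-$3$ quotients, but neither step is carried out, and the naive ``send a few generators to the target and the rest to $1$'' construction fails for the same reason as above. The paper's argument here needs two \emph{different} quotients working in tandem with the same embedding trick: the map onto $\mathbb{Z}_2\ast\mathbb{Z}_2$ of Theorem \ref{Map-to-Dinfty} shows both $[\sigma_{i+1},\sigma_i,\sigma_{i+1}]$ and $[\sigma_{i+1},\sigma_i,\sigma_{i+2}]$ are nontrivial (they both map to $(ba)^4\notin\Gamma_4$), but cannot distinguish them; the map onto $D_8$ of Theorem \ref{Map-to-D8} sends one to $(ab)^4$ and the other to $1$ and so separates them. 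Without constructing these specific homomorphisms (and verifying they respect \eqref{Relation-Two} and \eqref{Relation-Three}, which is exactly the delicate point), the rank $2\lfloor n/2\rfloor-1$ is only an upper bound in your argument.
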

Using the above theorem, we compute a presentation of the quotient $J_n/\Gamma_3(J_n)$ and we obtain the following result.
\begin{thmA}\label{Theorem_J_n/Gamma_3}
For $n \geq 3$, the group $J_n/\Gamma_3(J_n)$ has order $2^{{\left\lfloor \frac{n}{2} \right\rfloor}+n-1}$, and it is generated by $\{\sigma_i ~|~ i = 2,3,\ldots,n\}$ subject to the relations:
\begin{eqnarray}
\sigma_{i}^{2} &=&1 \hspace*{2mm} \textrm{for } 2 \leq i \leq n, \label{Relation-One-Quotient}\\ 
{[}\sigma_{i},\sigma_{j}{]} &=&1 \hspace*{2mm} \textrm{for } i < \Bigl \lfloor\frac{n+1}{2} \Bigr\rfloor  ~~ \textrm{or } j \equiv i \pmod {2}, \label{Relation-Two-Quotient} \\
{[}\sigma_{i},\sigma_{j},\sigma_{k}{]} &=&1 \hspace*{2mm}  \textrm{for } 2 \leq i,j,k \leq n ,\label{Relation-Three-Quotient} \\
{[}\sigma_{i},\sigma_{j}{]} &=& {[}\sigma_{i},\sigma_{k}{]} \hspace*{2mm} \textrm{for } 2 \leq i \leq j,k \leq n \textrm{ and } k\equiv j \pmod 2. \label{Relation-Four-Quotient}
\end{eqnarray}
In particular, $J_4/\Gamma_3(J_4) \cong \mathbb{Z}_2^2 \wr \mathbb{Z}_2$ and $J_5/\Gamma_3(J_5) \cong \mathbb{Z}_2 \times (\mathbb{Z}_2^2 \wr \mathbb{Z}_2$).
\end{thmA}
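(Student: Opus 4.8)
The plan is to identify $J_n/\Gamma_3(J_n)$, the largest quotient of $J_n$ of nilpotency class at most $2$, with the group $G := \langle \sigma_2,\dots,\sigma_n \mid (\ref{Relation-One-Quotient})\text{--}(\ref{Relation-Four-Quotient})\rangle$ defined by the stated presentation, via two mutually inverse surjections. The only external inputs are the minimal presentation of Theorem~\ref{Minimal-Presentation-Cactus-Rewritten} and the isomorphism $\Gamma_2(J_n)/\Gamma_3(J_n)\cong\mathbb{Z}_2^{\lfloor n/2\rfloor}$ of Theorem~\ref{Lower-Central-Series:DoNotStop/Factors}(i). First I would pin down the order: abelianizing the presentation of Theorem~\ref{Minimal-Presentation-Cactus-Rewritten}, relations (\ref{Relation-Two}) and (\ref{Relation-Three}) become vacuous, so $J_n/\Gamma_2(J_n)\cong\mathbb{Z}_2^{n-1}$; together with Theorem~\ref{Lower-Central-Series:DoNotStop/Factors}(i) this gives $|J_n/\Gamma_3(J_n)| = 2^{n-1}\cdot 2^{\lfloor n/2\rfloor} = 2^{\lfloor n/2\rfloor + n - 1}$, the claimed order.

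The computational heart is one commutator calculation, valid in any group in which the $\sigma_i$ are involutions and all triple commutators vanish (so commutators are central of order dividing $2$). From $\sigma_k\sigma_i\sigma_k = \sigma_i[\sigma_i,\sigma_k]$ one gets $(\sigma_k\sigma_i\sigma_k\sigma_j)^2 = [\sigma_i,\sigma_j]$, so relation (\ref{Relation-Two}) collapses to $[\sigma_i,\sigma_j]=1$ whenever $2\le i\le j$ and $i+j\le n$. Similarly the two sides of (\ref{Relation-Three}) reduce to $\sigma_j\sigma_k[\sigma_j,\sigma_k][\sigma_j,\sigma_{i+j}]$ and $\sigma_j\sigma_k[\sigma_j,\sigma_{k-i}]$, so (\ref{Relation-Three}) collapses to $[\sigma_j,\sigma_k]\,[\sigma_j,\sigma_{i+j}] = [\sigma_j,\sigma_{k-i}]$. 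Specialising to $i=1$ gives the recursion $[\sigma_j,\sigma_k] = [\sigma_j,\sigma_{k-1}]\,[\sigma_j,\sigma_{j+1}]$, whence $[\sigma_j,\sigma_k]=1$ when $k\equiv j \pmod 2$ and $[\sigma_j,\sigma_k]=[\sigma_j,\sigma_{j+1}]$ otherwise; a parity count then shows the general instance of (\ref{Relation-Three}) is a consequence, yielding exactly (\ref{Relation-Four-Quotient}) together with the clause $j\equiv i\pmod 2$ of (\ref{Relation-Two-Quotient}). Feeding this parity reduction back into the collapsed (\ref{Relation-Two}), the sharpest instance is $j=i+1$, forcing $[\sigma_i,\sigma_{i+1}]=1$ for $2i+1\le n$, i.e. for $i<\lfloor(n+1)/2\rfloor$; the arithmetic $2i<i+j\le n \Rightarrow i<\lfloor(n+1)/2\rfloor$ shows this is the remaining clause of (\ref{Relation-Two-Quotient}) and that no stronger relation arises.

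With this in hand I would close the argument by two surjections. Reading the computation inside $J_n/\Gamma_3(J_n)$ shows (\ref{Relation-One-Quotient})--(\ref{Relation-Four-Quotient}) all hold there, so $\sigma_i\mapsto\bar\sigma_i$ defines a surjection $G\twoheadrightarrow J_n/\Gamma_3(J_n)$. Reading it inside $G$, which is of class $\le 2$ by (\ref{Relation-Three-Quotient}) and hence has central commutators of order $\le 2$, shows conversely that (\ref{Relation-One})--(\ref{Relation-Three}) hold in $G$; thus $\sigma_i\mapsto\sigma_i$ defines a homomorphism $J_n\to G$ killing $\Gamma_3(J_n)$, which descends to a surjection $J_n/\Gamma_3(J_n)\twoheadrightarrow G$. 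Since $J_n/\Gamma_3(J_n)$ is finite, both groups are finite of the same order, so both maps are isomorphisms and the presentation is proved. For the two special cases I would specialise the presentation: for $n=4$ the generators $\sigma_2,\sigma_3,\sigma_4$ satisfy $[\sigma_2,\sigma_4]=1$ with two independent central involutions $[\sigma_2,\sigma_3],[\sigma_3,\sigma_4]$, matching the standard generators of $\mathbb{Z}_2^2\wr\mathbb{Z}_2$; for $n=5$ the generator $\sigma_2$ becomes central and splits off a direct $\mathbb{Z}_2$ factor, leaving $\langle\sigma_3,\sigma_4,\sigma_5\rangle\cong\mathbb{Z}_2^2\wr\mathbb{Z}_2$, so $J_5/\Gamma_3(J_5)\cong\mathbb{Z}_2\times(\mathbb{Z}_2^2\wr\mathbb{Z}_2)$.

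The main obstacle I anticipate is the index bookkeeping: keeping the collapsed relations exactly aligned with the floor-function conditions in (\ref{Relation-Two-Quotient}) and (\ref{Relation-Four-Quotient}), and, above all, confirming that the collapsed forms of (\ref{Relation-Two}) and (\ref{Relation-Three}) produce \emph{no} relation beyond them—equivalently, that the $\lfloor n/2\rfloor$ commutators $[\sigma_i,\sigma_{i+1}]$ with $i\ge\lfloor(n+1)/2\rfloor$ survive independently. I would not verify this independence by hand but deduce it from the order count: the surjection $G\twoheadrightarrow J_n/\Gamma_3(J_n)$ combined with $|J_n/\Gamma_3(J_n)|=2^{\lfloor n/2\rfloor+n-1}$ (from Theorem~\ref{Lower-Central-Series:DoNotStop/Factors}(i)) forces these commutators to be non-redundant, which is precisely why the argument routes through two surjections rather than through a direct, reversible sequence of Tietze transformations.
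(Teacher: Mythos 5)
Your proposal is correct, but it reaches the presentation by a genuinely different mechanism than the paper. The paper applies the presentation-of-an-extension result of Johnson to the central extension $1\to\Gamma_2(J_n)/\Gamma_3(J_n)\to J_n/\Gamma_3(J_n)\to J_n/\Gamma_2(J_n)\to 1$: it adjoins the commutators $[\sigma_{i+1},\sigma_i]$ as extra generators, writes out the relation sets $R$, $\widetilde S$, $T$ explicitly, and then eliminates the extra generators to land on relations (\ref{Relation-One-Quotient})--(\ref{Relation-Four-Quotient}); the order comes from the same exact sequence, exactly as in your first step. You instead take the candidate presented group $G$ and sandwich it between two surjections, $G\twoheadrightarrow J_n/\Gamma_3(J_n)$ and $J_n/\Gamma_3(J_n)\twoheadrightarrow G$, closing with the order count. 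The computational content largely coincides: your class-$2$ collapse of relations (\ref{Relation-Two}) and (\ref{Relation-Three}) to $[\sigma_i,\sigma_j]=1$ (for $i+j\le n$) and to $[\sigma_j,\sigma_k][\sigma_j,\sigma_{i+j}]=[\sigma_j,\sigma_{k-i}]$ is precisely what Lemma \ref{lemma2}, Corollary \ref{Trivial-Two-Commutators-I} and Lemmas \ref{lemma3}, \ref{lemma4} establish, and both arguments lean on Theorem \ref{Lower-Central-Series:DoNotStop/Factors}(i) for the lower bound that prevents further collapse. What your route buys is that you never manipulate the auxiliary generating set or justify the elimination of the commutator generators; the price is the step you defer to ``a parity count'', namely verifying inside $G$ that every instance of relation (\ref{Relation-Three}) holds --- this does work (split on the parity of $i$ and then on whether $j\equiv k \pmod 2$, using (\ref{Relation-Two-Quotient}) and (\ref{Relation-Four-Quotient})), but it is a genuine case analysis that must be written out. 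Your identification of the $n=4,5$ quotients with $\mathbb{Z}_2^2\wr\mathbb{Z}_2$ via the swap generator is cleaner than the paper's explicit change of generators.
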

\begin{ack}
The authors are grateful to John Guaschi and Paolo Bellingeri for their mentoring, helpful insights and careful reading of the paper, and to Emmanuel Graff and Jacques Darné for helpful discussions and remarks. The first author has received funding from the Normandy region no. 00123353-22E01371. The second author has received funding from the European Union’s Horizon Europe Research and Innovation programme under the Marie Sklodowska
Curie grant agreement no. 101066588.
\end{ack}
\section{A minimal presentation of Cactus groups}\label{Minimal-Presentation-Section}
Recall the standard presentation of the cactus group from Section \ref{Introduction}. Note that $J_2 \cong \mathbb{Z}_2$ and  $J_3 \cong \mathbb{Z}_2 \ast \mathbb{Z}_2$. The generator $\sigma_{p,q}$ of $J_n$ may be represented as the configuration of $n$ monotonic strings on the plane where the crossing involves the strings $p, p+1, \dots, q$ as shown in Figure \ref{Pic:representation}. An example of an element of $J_5$ is shown in Figure \ref{Pic:cactus_example}, and the relations of $J_n$ are depicted in Figure \ref{Pic:cactus_relations}. One feature that distinguishes $J_n$ from $B_n$ is the presence of torsion and the absence of Artin relations. For example, in the group $J_n$, we do not have the braid relation $\sigma_{1,2}\sigma_{2,3}\sigma_{1,2}=\sigma_{2,3}\sigma_{1,2}\sigma_{2,3}$.\\
\begin{figure}[h]
\begin{center}
\begin{tikzpicture}[line cap=round,line join=round,x=0.7cm,y=.6cm,rounded corners=3pt,line width=1pt]
\draw  (2.,-0.5)-- (2.,-3.5);
\draw  (3.,-0.5)-- (3.,-3.5);
\draw  (5.,-0.5)-- (5.,-3.5);
\draw  (12.,-0.5)-- (12.,-3.5);
\draw  (14.,-0.5)-- (14.,-3.5);
\draw  (15.,-0.5)-- (15.,-3.5);
\draw  (6.,-0.5)-- (6.,-1.4)-- (11.,-2.6)-- (11.,-3.5);
\draw  (7.,-0.5)-- (7.,-1.4)-- (10.,-2.6)-- (10.,-3.5);
\draw  (10.,-0.5)-- (10.,-1.4)-- (7.,-2.6)-- (7.,-3.5);
\draw  (11.,-0.5)-- (11.,-1.4)-- (6.,-2.6)-- (6.,-3.5);
\draw (1.68,0.5) node[anchor=north west] {$1$};
\draw (2.68,0.5) node[anchor=north west] {$2$};
\draw (4.28,0.5) node[anchor=north west] {$p-1$};
\draw (5.68,0.4) node[anchor=north west] {$p$};
\draw (6.28,0.5) node[anchor=north west] {$p+1$};
\draw (9.28,0.5) node[anchor=north west] {$q-1$};
\draw (10.68,0.4) node[anchor=north west] {$q$};
\draw (11.28,0.5) node[anchor=north west] {$q+1$};
\draw (13.24,0.5) node[anchor=north west] {$n-1$};
\draw (14.68,0.4) node[anchor=north west] {$n$};
\draw (8,-0.86) node[anchor=north west] {$\cdots$};
\draw (8,-2.5) node[anchor=north west] {$\cdots$};
\draw (12.5,-1.62) node[anchor=north west] {$\cdots$};
\draw (3.5,-1.64) node[anchor=north west] {$\cdots$};
\bigskip\end{tikzpicture}
\caption{Diagrammatic representation of the element $\sigma_{p,q}$ of $J_n$}
\label{Pic:representation}
\end{center}
\end{figure}
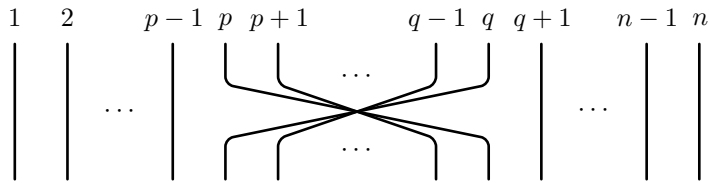
\begin{figure}[h]
\begin{center}
\begin{tikzpicture}[line cap=round,line join=round,x=.5cm,y=.3cm,rounded corners=3pt,line width=1pt]
\draw (1.,-0.5)-- (1.,-1.4)-- (1.,-2.6)-- (1.,-3.4)-- (1.,-4.6)-- (1.,-5.4)-- (3.,-6.6)-- (3.,-7.5);
\draw (2.,-0.5)-- (2.,-1.4)-- (4.,-2.6)-- (4.,-3.4)-- (5.,-4.6)-- (5.,-5.4)-- (5.,-6.6)-- (5.,-7.5);
\draw (3.,-0.5)-- (3.,-1.4)-- (3.,-2.6)-- (3.,-3.4)-- (3.,-4.6)-- (3.,-5.4)-- (1.,-6.6)-- (1.,-7.5);
\draw (4.,-0.5)-- (4.,-1.4)-- (2.,-2.6)-- (2.,-3.4)-- (2.,-4.6)-- (2.,-5.4)-- (2.,-6.6)-- (2.,-7.5);
\draw (5.,-0.5)-- (5.,-1.4)-- (5.,-2.6)-- (5.,-3.4)-- (4.,-4.6)-- (4.,-5.4)-- (4.,-6.6)-- (4.,-7.5);
\bigskip\end{tikzpicture}
\caption{The cactus $\sigma_{2,3}\sigma_{4,5}\sigma_{1,3}$ of $J_5$}
\label{Pic:cactus_example}
\end{center}
\end{figure}
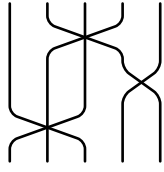
\begin{figure}[h]
\begin{center}
\begin{tikzpicture}[line cap=round,line join=round,x=.5cm,y=.3cm,rounded corners=3pt,line width=1pt]
\draw (2.,-0.5)-- (2.,-1.4)-- (4.,-2.6)-- (4.,-3.4)-- (2.,-4.6)-- (2.,-5.5);
\draw (3.,-0.5)-- (3.,-1.4)-- (3.,-2.6)-- (3.,-3.4)-- (3.,-4.6)-- (3.,-5.5);
\draw (4.,-0.5)-- (4.,-1.4)-- (2.,-2.6)-- (2.,-3.4)-- (4.,-4.6)-- (4.,-5.5);
\draw (1.,-0.5)-- (1.,-1.4)-- (1.,-2.6)-- (1.,-3.4)-- (1.,-4.6)-- (1.,-5.5);
\draw (5.,-0.5)-- (5.,-1.4)-- (5.,-2.6)-- (5.,-3.4)-- (5.,-4.6)-- (5.,-5.5);
\draw (6.,-0.5)-- (6.,-1.4)-- (6.,-2.6)-- (6.,-3.4)-- (6.,-4.6)-- (6.,-5.5);
\node at (7.5,-3) {$=$ \hspace*{3pt}};	
\end{tikzpicture}
\begin{tikzpicture}[line cap=round,line join=round,x=.5cm,y=.3cm,rounded corners=3pt,line width=1pt]
\draw (1.,-0.5)-- (1.,-5.5);
\draw (2.,-0.5)-- (2.,-5.5);
\draw (3.,-0.5)-- (3.,-5.5);
\draw (4.,-0.5)-- (4.,-5.5);
\draw (5.,-0.5)-- (5.,-5.5);
\draw (6.,-0.5)-- (6.,-5.5);
\node at (7.5,-3) {\hspace*{10pt}};
\end{tikzpicture}
\begin{tikzpicture}[line cap=round,line join=round,x=.5cm,y=.3cm,rounded corners=3pt,line width=1pt]
\draw (1.,-0.5)-- (1.,-1.4)-- (2.,-2.6)-- (2.,-3.4)-- (2.,-4.6)-- (2.,-5.5);
\draw (2.,-0.5)-- (2.,-1.4)-- (1.,-2.6)-- (1.,-3.4)-- (1.,-4.6)-- (1.,-5.5);
\draw (3.,-0.5)-- (3.,-1.4)-- (3.,-2.6)-- (3.,-3.4)-- (6.,-4.6)-- (6.,-5.5);
\draw (4.,-0.5)-- (4.,-1.4)-- (4.,-2.6)-- (4.,-3.4)-- (5.,-4.6)-- (5.,-5.5);
\draw (5.,-0.5)-- (5.,-1.4)-- (5.,-2.6)-- (5.,-3.4)-- (4.,-4.6)-- (4.,-5.5);
\draw (6.,-0.5)-- (6.,-1.4)-- (6.,-2.6)-- (6.,-3.4)-- (3.,-4.6)-- (3.,-5.5);
\node at (7.5,-3) {$=$ \hspace*{3pt}};
\end{tikzpicture}
\begin{tikzpicture}[line cap=round,line join=round,x=.5cm,y=.3cm,rounded corners=3pt,line width=1pt]
\draw (6.,-0.5)-- (6.,-1.4)-- (3.,-2.6)-- (3.,-3.4)-- (3.,-4.6)-- (3.,-5.5);
\draw (5.,-0.5)-- (5.,-1.4)-- (4.,-2.6)-- (4.,-3.4)-- (4.,-4.6)-- (4.,-5.5);
\draw (4.,-0.5)-- (4.,-1.4)-- (5.,-2.6)-- (5.,-3.4)-- (5.,-4.6)-- (5.,-5.5);
\draw (3.,-0.5)-- (3.,-1.4)-- (6.,-2.6)-- (6.,-3.4)-- (6.,-4.6)-- (6.,-5.5);
\draw (1.,-0.5)-- (1.,-1.4)-- (1.,-2.6)-- (1.,-3.4)-- (2.,-4.6)-- (2.,-5.5);
\draw (2.,-0.5)-- (2.,-1.4)-- (2.,-2.6)-- (2.,-3.4)-- (1.,-4.6)-- (1.,-5.5);
\end{tikzpicture}
\hspace*{0.25\textwidth}

\begin{tikzpicture}[line cap=round,line join=round,x=.5cm,y=.3cm,rounded corners=3pt,line width=1pt]
\draw (1.,-0.5)-- (1.,-1.4)-- (6.,-2.6)-- (6.,-3.4)-- (6.,-4.6)-- (6.,-5.5);
\draw (2.,-0.5)-- (2.,-1.4)-- (5.,-2.6)-- (5.,-3.4)-- (5.,-4.6)-- (5.,-5.5);
\draw (3.,-0.5)-- (3.,-1.4)-- (4.,-2.6)-- (4.,-3.4)-- (1.,-4.6)-- (1.,-5.5);
\draw (4.,-0.5)-- (4.,-1.4)-- (3.,-2.6)-- (3.,-3.4)-- (2.,-4.6)-- (2.,-5.5);
\draw (5.,-0.5)-- (5.,-1.4)-- (2.,-2.6)-- (2.,-3.4)-- (3.,-4.6)-- (3.,-5.5);
\draw (6.,-0.5)-- (6.,-1.4)-- (1.,-2.6)-- (1.,-3.4)-- (4.,-4.6)-- (4.,-5.5);
\node at (7.5,-3) {$=$ \hspace*{3pt}};
\end{tikzpicture}
\begin{tikzpicture}[line cap=round,line join=round,x=.5cm,y=.3cm,rounded corners=3pt,line width=1pt]
\draw (6.,-0.5)-- (6.,-1.4)-- (3.,-2.6)-- (3.,-3.4)-- (4.,-4.6)-- (4.,-5.5);
\draw (5.,-0.5)-- (5.,-1.4)-- (4.,-2.6)-- (4.,-3.4)-- (3.,-4.6)-- (3.,-5.5);
\draw (4.,-0.5)-- (4.,-1.4)-- (5.,-2.6)-- (5.,-3.4)-- (2.,-4.6)-- (2.,-5.5);
\draw (3.,-0.5)-- (3.,-1.4)-- (6.,-2.6)-- (6.,-3.4)-- (1.,-4.6)-- (1.,-5.5);
\draw (2.,-0.5)-- (2.,-1.4)-- (2.,-2.6)-- (2.,-3.4)-- (5.,-4.6)-- (5.,-5.5);
\draw (1.,-0.5)-- (1.,-1.4)-- (1.,-2.6)-- (1.,-3.4)-- (6.,-4.6)-- (6.,-5.5);
\end{tikzpicture}
\caption{Examples of relations in cactus groups}
\label{Pic:cactus_relations}
\end{center}
\end{figure}
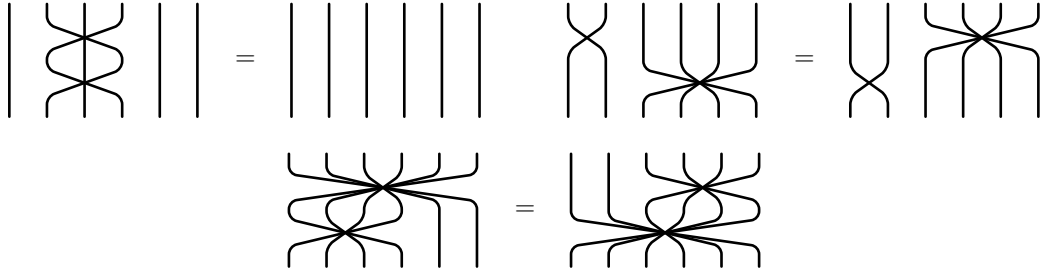

From the relation $\sigma_{p,q}=\sigma_{1,q}\sigma_{1,q-p+1}\sigma_{1,q}$, it is easy to see that $J_n$ may be generated by the set $\{\sigma_{1,i}$, $2 \leq i \leq n\}$. The goal of this section is to obtain a presentation of $J_n$ in terms of these generators (Theorem \ref{Minimal-Presentation-Cactus-Rewritten}).

We start by considering a subset of the above-mentioned set of defining relations of $J_n$. Let $\mathcal{R}$ denote the set of relations with $\sigma_{1,j}$ appearing at least once for every $2 \leq j \leq n-1$. We first prove that we need not consider all the relations in the standard presentation of $J_n$. That is, it suffices to take $\mathcal{R}$ to be the set of defining relations for presenting $J_n$. This leads to the following result. 

\begin{lemma}\label{Reducing-Relations-Lemma}
The standard presentation of the cactus group $J_n$ is equivalent to the presentation:
$$\langle \sigma_{p,q}, ~ 1 \leq p < q \leq n ~|~ \mathcal{R} \rangle,$$
where $\mathcal{R}$ is the subset of relations of the standard presentation in which $\sigma_{1,j}$ appears at least once for $2 \leq j \leq n-1$. 
\end{lemma}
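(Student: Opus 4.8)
The plan is to exploit that the proposed presentation has the \emph{same} generating set $\{\sigma_{p,q}\}$ as the standard one and that $\mathcal{R}$ is a \emph{subset} of the standard relations. Hence there is a canonical surjection from the group $G := \langle \sigma_{p,q} \mid \mathcal{R}\rangle$ onto $J_n$, and the two groups coincide precisely when every standard relation \emph{not} lying in $\mathcal{R}$ already holds in $G$. So the whole task reduces to deriving, from the relations of $\mathcal{R}$ alone, each standard relation in which no $\sigma_{1,j}$ with $2 \le j \le n-1$ occurs; concretely these are the squares $\sigma_{p,q}^2=1$, the commutations $\sigma_{p,q}\sigma_{r,s}=\sigma_{r,s}\sigma_{p,q}$, and the conjugations $\sigma_{p,q}\sigma_{r,s}=\sigma_{p+q-s,p+q-r}\sigma_{p,q}$ in which every interval involved is based at an index $\ge 2$, together with the isolated top relation $\sigma_{1,n}^2=1$.

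The main tool is the family of conjugation relations with outer interval based at $1$,
\[
\sigma_{1,m}\,\sigma_{r,s} = \sigma_{1+m-s,\,1+m-r}\,\sigma_{1,m}, \qquad [r,s]\subseteq[1,m],
\]
which say that conjugation by $\sigma_{1,m}$ realises the reflection $i \mapsto m+1-i$ on the remaining generators. For $m \le n-1$ each such relation lies in $\mathcal{R}$, since it contains $\sigma_{1,m}$, and specialising to $[r,s]=[1,q-p+1]$ inside $[1,q]$ yields the key identity $\sigma_{p,q} = \sigma_{1,q}\,\sigma_{1,q-p+1}\,\sigma_{1,q}$, valid in $G$ whenever $q \le n-1$. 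Thus, using only $\mathcal{R}$, every interior generator is a $\sigma_{1,q}$-conjugate of a generator $\sigma_{1,j}$ with $j \le n-1$. As a preliminary step I would also \emph{derive} from $\mathcal{R}$ the conjugation-by-$\sigma_{1,n}$ relations acting on interior generators, which are not a priori in $\mathcal{R}$: writing such a $\sigma_{p,q}$ (with $q \le n-1$) in the above form and conjugating each factor $\sigma_{1,j}$ by $\sigma_{1,n}$ rewrites it through relations that \emph{are} in $\mathcal{R}$.

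With these reductions in hand I would dispatch the three families of omitted relations by substitution. For the squares, $\sigma_{p,q}^2 = \sigma_{1,q}\,\sigma_{1,q-p+1}^2\,\sigma_{1,q} = 1$ using the good involutions $\sigma_{1,q}^2=\sigma_{1,q-p+1}^2=1$, and for second index $n$ one uses the conjugation form $\sigma_{a,n} = \sigma_{1,n}\,\sigma_{1,n-a+1}\,\sigma_{1,n}^{-1}$, which makes the square collapse without invoking $\sigma_{1,n}^2=1$. For a disjoint pair, conjugating both generators by $\sigma_{1,m}$ with $m$ the largest index involved turns the interval reaching $m$ into one based at $1$ while preserving disjointness, reducing the commutation to one already in $\mathcal{R}$; a nested pair $[r,s]\subset[p,q]$ is handled analogously, conjugating by $\sigma_{1,q}$ to reflect the outer interval onto $[1,1+q-p]$, whereupon the corresponding conjugation relation lies in $\mathcal{R}$.

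The step I expect to be the main obstacle is the bookkeeping at the top index $n$. Whenever the largest interval involved ends at $n$, the natural conjugator $\sigma_{1,n}$ is not itself among the generators that place a relation in $\mathcal{R}$, so the conjugation relations one wants to apply must be produced beforehand from $\mathcal{R}$ as above rather than quoted directly; one must also check that $\sigma_{1,n}$ interacts consistently with the interior generators and, in particular, account for the omitted square $\sigma_{1,n}^2=1$. Keeping careful track, across all three relation types, of exactly which auxiliary relations genuinely belong to $\mathcal{R}$ before they are used is where the argument is delicate, even though each individual rewriting is routine.
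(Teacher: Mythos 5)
Your overall strategy coincides with the paper's: both arguments reduce every omitted relation to relations of $\mathcal{R}$ by writing $\sigma_{p,q}$ as the conjugate $\sigma_{1,q}\sigma_{1,1+q-p}\sigma_{1,q}$ and then reflecting all intervals so that they become anchored at $1$. The paper carries this out as three explicit word computations (disjoint pairs with no index $1$, nestings sharing an endpoint with the outer interval, then general nestings), which is exactly your ``conjugate by $\sigma_{1,m}$ for the maximal index'' plan; like you, it leaves the squares $\sigma_{p,q}^2=1$ implicit, and your observation that $\sigma_{a,n}^2=1$ follows from $\sigma_{a,n}=\sigma_{1,n}\sigma_{1,1+n-a}\sigma_{1,n}^{-1}$ without invoking $\sigma_{1,n}^2=1$ is a nice touch the paper does not make.

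The point at which you hesitate is the point at which something genuinely breaks, and you should know it cannot be repaired on the proof side. If $\mathcal{R}$ is read literally as the set of relations containing some $\sigma_{1,j}$ with $2\le j\le n-1$, then every relation of $\mathcal{R}$ has the same number of occurrences of $\sigma_{1,n}$ on its two sides (the conjugated generator $\sigma_{p+q-s,p+q-r}$ can equal $\sigma_{1,n}$ only in the degenerate case $[r,s]=[p,q]=[1,n]$), so sending $\sigma_{1,n}\mapsto 1$ and every other generator to $0$ defines a homomorphism onto $\mathbb{Z}$; hence $\sigma_{1,n}$ has infinite order in $\langle\sigma_{p,q}\mid\mathcal{R}\rangle$ and $\sigma_{1,n}^2=1$ is not a consequence of $\mathcal{R}$. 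The conjugation relations $\sigma_{1,n}\sigma_{r,s}=\sigma_{1+n-s,1+n-r}\sigma_{1,n}$ with $2\le r\le s\le n-1$, which you rightly single out, are likewise not consequences (already for $n=4$ the relation $[\sigma_{1,4},\sigma_{2,3}]=1$ fails in the group presented by the literal $\mathcal{R}$), and your proposed preliminary derivation of them is circular as written: after conjugating the factors of $\sigma_{1,q}\sigma_{1,1+q-p}\sigma_{1,q}$ you still need the nesting relation for $[n-q+p,n]\subset[1+n-q,n]$, an omitted relation whose derivation by your reflection trick would in turn require conjugation by $\sigma_{1,n}$ of an interior generator. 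The reading under which the lemma is true --- and the one the paper's proof tacitly uses, since its Case II starts from $\sigma_{p,q}\sigma_{1,r}=\sigma_{1,r}\sigma_{1+r-q,1+r-p}$ also when $r=n$, and since Theorem \ref{Minimal-Presentation-Cactus-Rewritten} retains $\sigma_n^2=1$ --- is that $\mathcal{R}$ contains every relation in which some $\sigma_{1,j}$ with $2\le j\le n$ appears. With that reading both of your obstacles vanish, your preliminary step is unnecessary, and the rest of your plan goes through exactly as in the paper.
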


\begin{proof}
We divide the proof into three cases. \\

Case I: Consider $\sigma_{p,q} \sigma_{r,s}=\sigma_{r,s} \sigma_{p,q} $, where $[r,s] \cap [p,q]= \emptyset$, and $p,q,r,s \neq 1$. Without loss of generality, we may assume that $1 < p <q < r < s$. Let us show that $\sigma_{p,q} \sigma_{r,s}=\sigma_{r,s} \sigma_{p,q} $ is a consequence of relations of $\mathcal{R}$:

\begin{eqnarray*}
\sigma_{p,q} \sigma_{r,s} &=& \sigma_{1,q} \sigma_{1, 1+q-p} \underline{\sigma_{1,q} \sigma_{1,s}} \sigma_{1, 1+s-r} \sigma_{1,s}\\
&=& \sigma_{1,q}  \underline{\sigma_{1, 1+q-p}\sigma_{1,s}}\sigma_{1+s-q,s} \sigma_{1, 1+s-r} \sigma_{1,s} ~~ \textrm{ (since } 1+q-p < s)\\
&=&  \underline{\sigma_{1,q} \sigma_{1, s}}\sigma_{s-q+p,s} \sigma_{1+s-q,s} \sigma_{1, 1+s-r} \sigma_{1,s}\\
&=&   \sigma_{1,s} \sigma_{1+s-q, s}\sigma_{s-q+p,s} \underline{\sigma_{1+s-q,s} \sigma_{1, 1+s-r}} \sigma_{1,s}  ~~ \textrm{ (since } [1, 1+s-r] \text{ and } [1+s-q, s] \text{ are disjoint)}\\
&=&   \sigma_{1,s} \sigma_{1+s-q, s} \underline{ \sigma_{s-q+p,s} \sigma_{1, 1+s-r}} \sigma_{1+s-q,s} \sigma_{1,s}  ~~ \textrm{ (since } [1, 1+s-r] \text{ and } [s-q+p, s] \text{ are disjoint)}\\
&=&   \sigma_{1,s} \underline{ \sigma_{1+s-q, s} \sigma_{1, 1+s-r}} \sigma_{s-q+p, s} \sigma_{1+s-q,s} \sigma_{1,s}\\
&=&   \sigma_{1,s} \sigma_{1, 1+s-r}  \sigma_{1+s-q, s} \sigma_{s-q+p, s} \underline{ \sigma_{1+s-q,s} \sigma_{1,s}}
=  \sigma_{1,s} \sigma_{1, 1+s-r}  \sigma_{1+s-q, s} \underline{\sigma_{s-q+p, s}  \sigma_{1,s}} \sigma_{1,q}\\
&=&   \sigma_{1,s} \sigma_{1, 1+s-r}  \underline{\sigma_{1+s-q, s} \sigma_{1,s}} \sigma_{1, 1+q-p} \sigma_{1,q}\\
&=&   \sigma_{1,s} \sigma_{1, 1+s-r} \sigma_{1, s} \sigma_{1,q} \sigma_{1, 1+q-p} \sigma_{1,q}
=   \sigma_{r,s} \sigma_{p,q}.\\
\end{eqnarray*}

Case II: We now consider the relation $\sigma_{p,q} \sigma_{p,r}=\sigma_{p,r} \sigma_{p+r-q,r} $, where $ 1 < p < q< r$.
So we have:
\begin{eqnarray*}
 \sigma_{p,q} \sigma_{1,r}&=&\sigma_{1,r} \sigma_{1+r-q, 1+r-p}\\
 \sigma_{1,q} \sigma_{1,1+q-p}  \sigma_{1,q} \sigma_{1,r}&=&\sigma_{1,r} \sigma_{1, 1+r-p} \sigma_{1, 1+q-p} \sigma_{1, 1+r-p}\\
 \sigma_{1,q} \sigma_{1,1+q-p}  \sigma_{1,q} \sigma_{1,r} \sigma_{1, 1+r-p}&=&\sigma_{1,r} \sigma_{1, 1+r-p} \sigma_{1, 1+q-p}\\
 \sigma_{1,q} \sigma_{1,1+q-p}  \sigma_{1,q} \sigma_{1,r} \sigma_{1, 1+r-p}\sigma_{1,r}&=&\sigma_{1,r} \sigma_{1, 1+r-p} \sigma_{1,r}\sigma_{1,r}\sigma_{1, 1+q-p} \sigma_{1,r}\\
 \sigma_{p,q} \sigma_{p,r}&=&\sigma_{p,r} \sigma_{p+r-q,r}.
\end{eqnarray*}

Case III: Lastly, we consider relations of the form $\sigma_{r,s} \sigma_{p,q}=\sigma_{p,q} \sigma_{p+q-s,p+q-r} $, $[r,s] \subset [p,q]$ and $p >1$. 
 \begin{eqnarray*}
\sigma_{r,s} \sigma_{p,q} &=& \sigma_{1,s} \sigma_{1,1+s-r} \underline{\sigma_{1,s} \sigma_{1,q}} \sigma_{1,1+q-p}  \sigma_{1,q}\\
 &=& \sigma_{1,s} \underline{\sigma_{1,1+s-r} \sigma_{1,q}} \sigma_{1+q-s,q} \sigma_{1,1+q-p}  \sigma_{1,q}\\
 &=& \underline{\sigma_{1,s} \sigma_{1,q}} \sigma_{q+r-s,q} \sigma_{1+q-s,q} \sigma_{1,1+q-p}  \sigma_{1,q}\\
 &=& \sigma_{1,q} \sigma_{1+q-s, q} \underline{ \sigma_{q+r-s,q} \sigma_{1+q-s,q} }\sigma_{1,1+q-p}  \sigma_{1,q}\\
 &=& \sigma_{1,q} \sigma_{1+q-s, q}  \sigma_{1+q-s, q} \sigma_{1+q-s,1+q-r} \sigma_{1,1+q-p}  \sigma_{1,q} ~~ \text{ (by Case II)}\\
 &=& \sigma_{1,q} \underline{\sigma_{1+q-s,1+q-r} \sigma_{1,1+q-p}} \sigma_{1,q}
 = \sigma_{1,q} \sigma_{1,1+q-p}  \underline{ \sigma_{1+r-p,1+s-p}\sigma_{1,q}}\\
 &=& \sigma_{1,q} \sigma_{1,1+q-p}\sigma_{1,q}  \sigma_{q+p-s,q+p-r}
 = \sigma_{p,q} \sigma_{p+q-s,p+q-r}.
 \end{eqnarray*}
\end{proof}
In what follows, we show that $\{\sigma_{1,2},\ldots,\sigma_{1,n}\}$ is a minimal set of generators for $J_n$, and we compute a presentation of $J_n$ in terms of these generators.
\begin{lemma}\label{lemma1}
For all $n \geq 2$, the Abelianisation $J_n/[J_n,J_n]$ of $J_n$ is isomorphic to $\mathbb{Z}_2 ^{n-1}.$
\end{lemma}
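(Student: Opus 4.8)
The plan is to sandwich the abelianisation between copies of $\mathbb{Z}_2^{n-1}$ from above and below. For the upper bound I would use the fact already recorded in this section, that $J_n$ is generated by the $n-1$ elements $\sigma_{1,2},\ldots,\sigma_{1,n}$, each of which is an involution by the first family of defining relations. Passing to $J_n/[J_n,J_n]$, these generators commute and still square to the identity, so the abelianisation is generated by $n-1$ pairwise commuting elements of order dividing $2$; hence it is a quotient of $\mathbb{Z}_2^{n-1}$.

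For the matching lower bound I would exhibit a surjective homomorphism $\phi\colon J_n \to \mathbb{Z}_2^{n-1}$ onto the full group. Writing $e_1,\ldots,e_{n-1}$ for the standard basis of $\mathbb{Z}_2^{n-1}$, I define $\phi$ on the standard generators by $\sigma_{p,q}\mapsto e_{q-p}$, so that a generator is sent to the basis vector indexed by the span $q-p$ of its interval, a value that ranges exactly over $\{1,\ldots,n-1\}$. To see that $\phi$ is well defined I would verify the three families of defining relations: the involution relations map to $2e_{q-p}=0$, which holds since the target has exponent $2$; the disjointness relations are automatic because $\mathbb{Z}_2^{n-1}$ is abelian; and the nesting relation $\sigma_{p,q}\sigma_{r,s}=\sigma_{p+q-s,p+q-r}\sigma_{p,q}$ is respected because the reflected interval $[p+q-s,p+q-r]$ has the same span $s-r$ as $[r,s]$, so both sides are sent to $e_{q-p}+e_{s-r}$. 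Since $\phi(\sigma_{1,i+1})=e_i$ for $1\le i\le n-1$, the map is surjective.

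Because $\mathbb{Z}_2^{n-1}$ is abelian, $\phi$ factors through $J_n/[J_n,J_n]$, yielding a surjection $J_n/[J_n,J_n]\twoheadrightarrow\mathbb{Z}_2^{n-1}$; combined with the upper bound this forces $J_n/[J_n,J_n]\cong\mathbb{Z}_2^{n-1}$. I do not expect any serious obstacle here. The only genuine computation is checking that $\phi$ respects the nesting relation, and this reduces to the single observation that reflecting a subinterval inside a larger interval preserves its span. The one point I would state carefully is the range of the span $q-p$, since it is precisely this that matches the rank of the target and makes the two estimates meet.
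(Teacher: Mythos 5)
Your proof is correct. It differs in structure from the paper's: the paper computes a presentation of $J_n/[J_n,J_n]$ directly, adding commutativity to the standard presentation and then observing that the nesting relations identify $\overline{\sigma_{p,q}}$ with $\overline{\sigma_{r,m}}$ exactly when $m-r=q-p$, which reduces the presentation to $\langle \overline{\sigma_{1,q}}\mid \overline{\sigma_{1,q}}^2=e,\ \text{commutation}\rangle\cong\mathbb{Z}_2^{n-1}$. You instead sandwich the abelianisation: an upper bound from the fact that $J_n$ is generated by the $n-1$ involutions $\sigma_{1,2},\ldots,\sigma_{1,n}$, and a lower bound from the explicit surjection $\sigma_{p,q}\mapsto e_{q-p}$, verified on the three families of standard relations. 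The key observation is the same in both arguments --- the reflected interval $[p+q-s,p+q-r]$ has the same span as $[r,s]$ --- but your explicit homomorphism has the merit of certifying cleanly that no further collapse occurs (the ``only if'' half of the paper's identification of generators, which the paper asserts as easy to see), at the cost of the two-step quotient-plus-surjection bookkeeping; note that the final step does require the remark that a quotient of the finite group $\mathbb{Z}_2^{n-1}$ admitting a surjection onto $\mathbb{Z}_2^{n-1}$ must be all of it, which you use implicitly and correctly. The paper's route, by contrast, hands you the presentation of the abelianisation in one pass.
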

\begin{proof}
A presentation of $J_n/[J_n,J_n]$ is given by: $$\begin{array}{ll}
J_n/[J_n,J_n]=\langle
\overline{\sigma_{p,q}}, \ \ 1 \leq p < q \leq n~|~&\overline{\sigma_{p,q}}^2=e, \ \ \overline{\sigma_{p,q}\sigma_{r,s}}=\overline{\sigma_{r,s}\sigma_{p,q}} \ \ \text{for} \ \ [ p,q ] \cap [ r,s ] = \emptyset, \\& \overline{\sigma_{p,q}\sigma_{r,s}}=\overline{\sigma_{p+q-s,p+q-r}\sigma_{p,q}} \ \ \text{for} \ \ [ r,s ] \subset [ p,q ], \\ & \overline{g_1g_2}=\overline{g_2g_1} \ \ \text{for all } (g_1,g_2) \in J_n^2\rangle.
\end{array}$$
Since $\left\{\sigma_{p,q}\right\}_{1 \leq p<q \leq n}$ generates $J_n$, we have $\overline{g_1}\overline{g_2}=\overline{g_2}\overline{g_1}$ for all $(g_1,g_2) \in J_n^2$ if and only if $\overline{\sigma_{p,q}\sigma_{r,s}}=\overline{\sigma_{r,s}\sigma_{p,q}}$ for all $1 \leq p < q \leq n$ and $1 \leq r < s \leq n$. So, 
$$\begin{array}{ll}
J_n/[J_n,J_n]=\langle
\overline{\sigma_{p,q}}, \ \ 1 \leq p < q \leq n~|~&\overline{\sigma_{p,q}}^2=e, \\& \overline{\sigma_{p,q}\sigma_{r,s}}=\overline{\sigma_{p+q-s,p+q-r}\sigma_{p,q}} \ \ \text{for} \ \ [ r,s ] \subset [ p,q ], \\ & \overline{\sigma_{p,q}\sigma_{r,s}}=\overline{\sigma_{r,s}\sigma_{p,q}}\rangle.
\end{array}$$
Then, it is easy to see that $\overline{\sigma_{p,q}}=\overline{\sigma_{r,m}}$ if and only if $m-r=q-p$. so we have: $$
J_n/[J_n,J_n]=\langle
\overline{\sigma_{1,q}}, \ \ 1 < q \leq n~|~\overline{\sigma_{1,q}}^2=e, \ \ \overline{\sigma_{1,q}\sigma_{1,s}}=\overline{\sigma_{1,s}\sigma_{1,q}}\rangle\cong \left(\mathbb{Z}/2 \mathbb{Z} \right)^{n-1}$$
as required.
\end{proof}
We now prove the main theorem of this section.

\begin{proof}[Proof of Theorem \ref{Minimal-Presentation-Cactus-Rewritten}]
We first prove that relations (\ref{Relation-One})-(\ref{Relation-Three}) are indeed relations in $J_n$. It is evident that $\sigma_{1,i}^2=1$ for all $i \in [2,n]$, which yields Relation \ref{Relation-One}.\\
If $4 \leq i+j \leq k\leq n$ and $2 \leq i \leq j$, we have:
$$\left(\sigma_{1,k}\sigma_{1,i}\sigma_{1,k}\sigma_{1,j}\right)^2 = \left(\sigma_{k-i+1,k}\sigma_{1,j}\right)^2.$$
Now $i+j \leq k$ implies that $j < k-i+1$. Thus, $\sigma_{k-i+1,k}$ and $\sigma_{1,j}$ commute and we have:
$$\left(\sigma_{1,k}\sigma_{1,i}\sigma_{1,k}\sigma_{1,j}\right)^2 = 1.$$
Let $ 3 \leq i+j < k \leq n$ such that $1 \leq i$ and $2 \leq j$, then we have: $$\sigma_{1,k}\sigma_{1,i+j}\sigma_{1,j}\sigma_{1,i+j} =\sigma_{1,k}\sigma_{i+1,i+j}.$$
On the other hand, for $k-i > j$, we have $k-i-j+1 >1$ and $$\sigma_{1,k-i}\sigma_{1,j}\sigma_{1,k-i}\sigma_{1,k}=\sigma_{k-i-j+1,k-i}\sigma_{1,k}=\sigma_{1,k}\sigma_{i+1,i+j}.$$
Next, we show that each of the relations in the standard presentation can be transformed into the defining relations given in the new presentation by Tietze transformations. We recall that for  $1\leq p< q \leq n$, we have $\sigma_{p,q}=\sigma_{1,q}\sigma_{1,q-p+1}\sigma_{1,q}$.\\
For $1\leq p< q \leq n$, we have $q-p+1 \in [ 2,n]$  and

$$\begin{array}{llllllllll}
 & \left(\sigma_{p,q}\right)^2=1
\Leftrightarrow & \left(\sigma_{1,q}\sigma_{1,q-p+1}\sigma_{1,q}\right)^2=1
\Leftrightarrow & \left(\sigma_{1,q-p+1}\right)^2=1.
\end{array}$$

Next, let $1 \leq r < s \leq n$ and $1\leq p < q \leq n$ such that $[ r,s] \cap [ p,q ] =\emptyset$. Without loss of generality, we may assume that $s < p$. Then:
$$\begin{array}{lllll}
& \left(\sigma_{r,s}\sigma_{p,q}\right)^2 =1
\Leftrightarrow \left(\sigma_{1,s}\sigma_{1,s-r+1}\underline{\sigma_{1,s}\sigma_{1,q}}\sigma_{1,q-p+1}\sigma_{1,q}\right)^2 =1\\
\Leftrightarrow & \left(\sigma_{1,s}\sigma_{1,s-r+1}\sigma_{1,q}\underline{\sigma_{q+1-s,q}\sigma_{1,q-p+1}}\sigma_{1,q}\right)^2 =1 ~~ \text{ since } [ 1,q+1-p] \cap [ q+1-s,q] = \emptyset\\
\Leftrightarrow & \left(\sigma_{1,s}\sigma_{1,s-r+1}\sigma_{1,q}\sigma_{1,q-p+1}\underline{\sigma_{q+1-s,q}\sigma_{1,q}}\right)^2 =1\\
\Leftrightarrow & \left(\sigma_{1,s}\sigma_{1,s-r+1}\sigma_{1,q}\sigma_{1,q-p+1}\sigma_{1,q}\sigma_{1,s}\right)^2 =1\\
\Leftrightarrow & \left(\sigma_{1,s-r+1}\sigma_{1,q}\sigma_{1,q-p+1}\sigma_{1,q}\right)^2 =1.\\
\end{array}$$ 

Now we suppose that $i=s-r+1$, $j=q-p+1$ and $k=q$. From the way we have defined $[r,s]$ and $[p,q]$, we may assume that $i \leq j$. We have $s < p$ and $r \geq 1$, so that  $i+j=q-p+s-r+2\leq q$. If not, that is, if $i+j >q$, then $p < s-r+2 <s+1$, which is a contradiction. Moreover, since $q-p  \geq 1$ and $s-r \geq 1$, we have $i+j=q-p+s-r+2 \geq 4$. Hence, we obtain the relation 
$(\sigma_{1,k}\sigma_{1,i}\sigma_{1,k}\sigma_{1,j})^2=1$
for $4 \leq i+j \leq k \leq n$ and  $2 \leq i \leq j$, which yields (\ref{Relation-Two}). \\

We now consider the last set of relations $1 \leq r < s \leq n$ and $1\leq p < q \leq n$ such that $[r,s] \subset [p,q]$. We have:
$$\begin{array}{llllllll}
& \sigma_{p,q}\sigma_{r,s}=\sigma_{p+q-s,p+q-r}\sigma_{p,q}\\
\Leftrightarrow & \sigma_{1,q}\sigma_{1,q-p+1}\underline{\sigma_{1,q}\sigma_{1,s}}\sigma_{1,s-r+1}\sigma_{1,s}=\sigma_{1,p+q-r}\sigma_{1,s-r+1}\sigma_{1,p+q-r}\sigma_{1,q}\sigma_{1,q-p+1}\sigma_{1,q}\\
\Leftrightarrow & \sigma_{1,q}\sigma_{1,q-p+1}\sigma_{q-s+1,q}\underline{\sigma_{1,q}\sigma_{1,s-r+1}}\sigma_{1,s}=\sigma_{1,p+q-r}\sigma_{1,s-r+1}\sigma_{1,p+q-r}\sigma_{1,q}\sigma_{1,q-p+1}\sigma_{1,q}\\
\Leftrightarrow & \sigma_{1,q}\sigma_{1,q-p+1}\sigma_{q-s+1,q}\sigma_{q+r-s,q}\underline{\sigma_{1,q}\sigma_{1,s}}=\sigma_{1,p+q-r}\sigma_{1,s-r+1}\sigma_{1,p+q-r}\sigma_{1,q}\sigma_{1,q-p+1}\sigma_{1,q}\\
\Leftrightarrow & \sigma_{1,q}\sigma_{1,q-p+1}\sigma_{q-s+1,q}\sigma_{q-s+r,q}\sigma_{q-s+1,q}\sigma_{1,q}=\sigma_{1,p+q-r}\sigma_{1,s-r+1}\sigma_{1,p+q-r}\sigma_{1,q}\sigma_{1,q-p+1}\sigma_{1,q}\\
\Leftrightarrow & \sigma_{1,q}\sigma_{1,q-p+1}\sigma_{q-s+1,q}\sigma_{q-s+r,q}\sigma_{q-s+1,q}=\sigma_{1,p+q-r}\sigma_{1,s-r+1}\sigma_{1,p+q-r}\sigma_{1,q}\sigma_{1,q-p+1}.
\end{array}$$

Now, $r \geq 1$ and $s >p$, therefore $[ q-s+r,q] \subset [ q-s+1,q ]$ and $[ q-s+1, q-r+1] \subset [1,q-p+1]$. So we obtain:
$$\begin{array}{llllllll}
& \sigma_{p,q}\sigma_{r,s}=\sigma_{p+q-s,p+q-r}\sigma_{p,q}\\
\Leftrightarrow & \sigma_{1,q}\underline{\sigma_{1,q-p+1}\sigma_{q-s+1,q-r+1}}\sigma_{q-s+1,q}\sigma_{q-s+1,q}=\sigma_{1,p+q-r}\sigma_{1,s-r+1}\sigma_{1,p+q-r}\sigma_{1,q}\sigma_{1,q-p+1}\\
\Leftrightarrow & \sigma_{1,q}\sigma_{p-r+1,p-s+1}\sigma_{1,q-p+1}=\sigma_{1,p+q-r}\sigma_{1,s-r+1}\sigma_{1,p+q-r}\sigma_{1,q}\sigma_{1,q-p+1}\\
\Leftrightarrow & \sigma_{1,q}\sigma_{1,p-s+1}\sigma_{1,s-r+1}\sigma_{1,p-s+1}=\sigma_{1,p+q-r}\sigma_{1,s-r+1}\sigma_{1,p+q-r}\sigma_{1,q}.
\end{array}$$
We set $i=r-p$, $j=s-r+1$ and $k=q$. The case $i=0$ is trivial and does not yield any non-trivial relation, so we discard this case.\\
Considering $i \geq 1$, we have $i+j=s-p+1 \leq s \leq q=k$. But the case $i+j=k$ corresponds to the relation $\sigma_{1,j}\sigma_{1,k}=\sigma_{1,j}\sigma_{1,k}$ which is trivial, so we can remove it from the presentation.\\
Lastly, we notice that the relations corresponding to $k-i > i+j$ coincide with the relations $k-i < i+j.$ To see this, it suffices to suppose that $i+j \leq k-i$. It is easy to check that the relations corresponding to $k-i \geq i+j$ are all distinct. Also, since $r < s$, this implies that $j\geq 2$. So we obtain: $$\sigma_{1,k}\sigma_{1,i+j}\sigma_{1,j}\sigma_{1,i+j}=\sigma_{1,k-i}\sigma_{1,j}\sigma_{1,k-i}\sigma_{1,k},$$
for $3 \leq i+j < k \leq n,~1 \leq i, ~2 \leq j$ and $i+j \leq k-i$.\\

The fact that the number of generators is minimal is due to the fact that $J_n/[J_n,J_n] \cong \mathbb{Z}_2 ^{n-1}$ proved in Lemma \ref{lemma1}.
\end{proof}

\begin{remark}
We may compute and compare the number of relations in the two presentations. Let $G_n$ (resp. $\tilde{G}_n$) be the number of generators and $R_n$ (resp. $\tilde{R}_n$) be the number of relations in the standard (resp. new) presentation of the group $J_n$. Then:
$$\begin{array}{ccc}
R_n=\frac{6n^4-16n^3+48n^2-32n-3+3(-1)^n}{96} & \text{and} & G_n= \binom{n}{2}
\end{array}$$ and in the new presentation, we have: $$\begin{array}{ccc}
\tilde{R}_n=\frac{4n^3-18n^2+44n-27+3(-1)^n}{24} & \text{and} & \tilde{G}_n= n-1.
\end{array}$$
So in the original presentation, the number of generators is equivalent to $n^2$ and the number of relations is equivalent to $n^4/16$. On the other hand, in the new presentation, the number of generators is equivalent to $n$ and the number of relations is equivalent to $n^3/6$. One may show also that the relations obtained from the new presentation are non-redundant and distinct. We suspect that $\tilde{R}_n$ is the minimal number of relations needed to define the group $J_n$ but we do not yet have a formal proof.
\end{remark}

In the subsequent sections, we will make use of the new presentation to obtain some algebraic properties of $J_n$. For convenience, we omit ``1" in the notation of the generator $\sigma_{1,i}$ from now. 

\section{From cactus groups to dihedral groups}\label{Homomorphisms-Section}
In this section, using the new presentation of Theorem \ref{Minimal-Presentation-Cactus-Rewritten}, we construct explicit surjective homomorphisms of cactus groups onto (infinite) dihedral groups. Our aim is to prove Theorem \ref{Main-Theorem-Quotients}.\\
Consider the dihedral group $D_n$ of order $2n$ with the presentation:
$$ \langle a,b |a^2=b^2=(ab)^n=1\rangle.$$
\begin{theorem}\label{Map-to-D4}
For $n \geq 3$, there exists a surjective homomorphism $\varphi: J_n \to D_4$ given by 
 $$\begin{array}{ccccccc}
 \sigma_{i} & \mapsto & \left\{\begin{array}{lll}
1 & \text{if} & i < \left\lfloor \frac{n+1}{2} \right\rfloor \\
a & \text{if} & i \equiv 0 \pmod 2 \ \ \text{and} \ \ i \geq \left\lfloor \frac{n+1}{2} \right\rfloor \\
b & \text{if} & i \equiv 1 \pmod 2 \ \ \text{and} \ \ i \geq \left\lfloor \frac{n+1}{2} \right\rfloor,
\end{array}\right.
\end{array}$$
where $i=2, 3, \dots, n.$
\end{theorem}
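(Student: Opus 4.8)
The plan is to invoke the minimal presentation of Theorem \ref{Minimal-Presentation-Cactus-Rewritten}: to see that $\varphi$ extends to a homomorphism it suffices to check that the images of the generators satisfy the three families of relations (\ref{Relation-One}), (\ref{Relation-Two}) and (\ref{Relation-Three}) inside $D_4$, and then to verify surjectivity. Write $m=\lfloor (n+1)/2\rfloor$, so that $\varphi(\sigma_\ell)=1$ for $\ell<m$ while $\varphi(\sigma_\ell)\in\{a,b\}$ is determined by the parity of $\ell$ for $\ell\ge m$. Relation (\ref{Relation-One}) is immediate, since $1$, $a$ and $b$ are all involutions in $D_4$.

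For relation (\ref{Relation-Two}) I would first record, by a short case check in $D_4$, that for $x,y,z\in\{1,a,b\}$ one has $(xyxz)^2=1$ \emph{unless} $\{y,z\}=\{a,b\}$: the only elements of order $4$ in $D_4$ are $ab$ and $ba$, and $xyxz$ lands on one of these precisely when one of $y,z$ equals $a$ and the other equals $b$. Consequently the image of $(\sigma_k\sigma_i\sigma_k\sigma_j)^2$ is trivial unless $\varphi(\sigma_i)$ and $\varphi(\sigma_j)$ are $a$ and $b$ in some order, which forces $i,j\ge m$ with opposite parities, and hence $i+j\ge 2m+1$. The decisive point is then purely arithmetic: since $2m+1>n$ for every $n$, while the relation assumes $k\ge i+j$ with $k\le n$, this bad configuration can never occur, so (\ref{Relation-Two}) is always respected.

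For relation (\ref{Relation-Three}) I would exploit that the four relevant indices satisfy $j<i+j\le k-i<k$, so the set of indices that are $\ge m$ (equivalently, have nontrivial image) forms a suffix of this increasing chain. If at most three of them reach $m$, then enough of the four factors on each side collapse to $1$ that, using only $\varphi(\sigma_\ell)^2=1$, both sides reduce to $\varphi(\sigma_k)$; this clears every case except the one where all four indices are $\ge m$. In that remaining case I would split on the parity of $i$. When $i$ is even, $\varphi(\sigma_{i+j})=\varphi(\sigma_j)$ and $\varphi(\sigma_{k-i})=\varphi(\sigma_k)$, and both sides collapse to $\varphi(\sigma_k)\varphi(\sigma_j)$. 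When $i$ is odd, the two pairs have opposite parities, and a direct check of the four sign patterns $(\varphi(\sigma_k),\varphi(\sigma_j))\in\{a,b\}^2$ shows the two sides coincide, the two patterns giving $(ab)^2$ on one side and $(ba)^2$ on the other matching because $(ab)^2=(ba)^2$ is the central involution of $D_4$. This $D_4$ bookkeeping in the all-nontrivial case is where I expect the main friction.

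Finally, for surjectivity it suffices to show both $a$ and $b$ lie in the image, since they generate $D_4$. For $n\ge 3$ one has $m\ge 2$ and $m\le n-1$, so the interval $[m,n]$ contains two consecutive integers, hence both an even and an odd index in the admissible range $2\le i\le n$; thus some generator maps to $a$ and another to $b$. This argument breaks down exactly at $n=2$, where only $\sigma_2$ is available, which is precisely why the hypothesis $n\ge 3$ is imposed.
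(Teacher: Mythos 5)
Your proposal is correct and follows essentially the same route as the paper: both verify the three relation families of Theorem \ref{Minimal-Presentation-Cactus-Rewritten} inside $D_4$ by a case analysis on which of the indices reach $\left\lfloor\frac{n+1}{2}\right\rfloor$ and on their parities. Your packaging is slightly cleaner — the observation that $(xyxz)^2=1$ in $D_4$ unless $\{y,z\}=\{a,b\}$, combined with the arithmetic obstruction $i+j\ge 2\left\lfloor\frac{n+1}{2}\right\rfloor+1>n\ge k$, replaces the paper's explicit treatment of the boundary case $i=j=n/2$, $k=n$, and you also spell out surjectivity, which the paper leaves implicit — but the underlying computations coincide.
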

\begin{proof}
We check that $\varphi$ satisfies the relations given in Theorem \ref{Minimal-Presentation-Cactus-Rewritten}.\\
By definition, $\varphi(\sigma_{i})^2=1$, for all $i= 2, 3, \dots, n$.\\
We now consider the relations of type $(\sigma_{k}\sigma_{j}\sigma_{k}\sigma_{i})^2=1$, with $4 \leq i+j \leq k \leq n$ and $2\leq i \leq j$.\\
If $j < \left\lfloor \frac{n+1}{2} \right\rfloor$, then since $i \leq j$, we have $i < \left\lfloor \frac{n+1}{2} \right\rfloor$, and we obtain:
$$\varphi((\sigma_{k}\sigma_{j}\sigma_{k}\sigma_{i})^2)=\varphi(\sigma_{k}\sigma_{k})^2 =1.$$

Next, suppose that $j \geq \left\lfloor \frac{n+1}{2} \right\rfloor$. Since $i+j \leq n$, we have $i \leq \left\lfloor \frac{n+1}{2} \right\rfloor$. If $i < \left\lfloor \frac{n+1}{2} \right\rfloor$, then: 
$$\varphi((\sigma_{k}\sigma_{j}\sigma_{k}\sigma_{i})^2)=\varphi((\sigma_{k}\sigma_{j}\sigma_{k})^2) =1.$$

Now, if $i = \left\lfloor \frac{n+1}{2} \right\rfloor$, we consider two subcases. Suppose that $n$ is even. Then by our assumption, $i=j=\frac{n}{2}$, and $k=n$. According as $n/2$ is even or odd, we obtain:
$$\varphi((\sigma_{k}\sigma_{j}\sigma_{k}\sigma_{i})^2) = \left\{\begin{array}{lll}
a^8 = 1 & \text{if} & n \equiv 0 \pmod 4 \\
(abab)^2 = 1 & \text{if} & n \equiv 2 \pmod 4.
\end{array}\right.$$
The case where $n$ is odd does not satisfy the conditions of Relation (\ref{Relation-Three}), and so is not required. So, the Relation (\ref{Relation-Three}) is preserved under the map $\varphi$. 

Now, we consider the relations of type $$\sigma_{k}\sigma_{i+j}\sigma_{j}\sigma_{i+j} \sigma_{k}\sigma_{k-i}\sigma_{j}\sigma_{k-i}=1, $$ with 
$3 \leq i+j < k \leq n$,  $i+j \leq k-i$ ,$1 \leq i$ and $2 \leq j$.\\
We first suppose that $k < \left\lfloor \frac{n+1}{2} \right\rfloor$. By the bounds on the indices, we get 
$k-i, i+j, j < \left\lfloor \frac{n+1}{2} \right\rfloor$. So, the above relation is preserved trivially by map $\varphi$. So assume that $k \geq \left\lfloor \frac{n+1}{2} \right\rfloor$. We consider two subcases.\\
Let $k \geq \left\lfloor \frac{n+1}{2} \right\rfloor$ and $j < \left\lfloor \frac{n+1}{2} \right\rfloor$. Then:

$$\varphi(\sigma_{k}\sigma_{i+j}\sigma_{j}\sigma_{i+j} \sigma_{k}\sigma_{k-i}\sigma_{j}\sigma_{k-i})= \varphi(\sigma_{k})\varphi(\sigma_{i+j})^2 \varphi(\sigma_{k})\varphi(\sigma_{k-i})^2=  \varphi(\sigma_{k})^2=1.$$

Lastly, we assume that $k \geq \left\lfloor \frac{n+1}{2} \right\rfloor$ and $j \geq \left\lfloor \frac{n+1}{2} \right\rfloor$. Then $k-i>j\geq \left\lfloor \frac{n+1}{2} \right\rfloor$, and

$$\varphi(\sigma_{k}\sigma_{i+j}\sigma_{j}\sigma_{i+j} \sigma_{k}\sigma_{k-i}\sigma_{j}\sigma_{k-i}) = \left\{\begin{array}{lll}
a^8 = 1 & \text{if} & (k,i,j) \equiv (0,0,0) \pmod 2  \\
abbbaaba = 1 & \text{if} & (k,i,j) \equiv (0,0,1) \pmod 2 \\
(ab)^4 = 1 & \text{if} & (k,i,j) \equiv (0,1,0) \pmod 2  \\
baaabbab = 1 & \text{if} & (k,i,j) \equiv (1,0,0) \pmod 2  \\
aabaabbb = 1 & \text{if} & (k,i,j) \equiv (0,1,1) \pmod 2  \\
b^8 = 1 & \text{if} & (k,i,j) \equiv (1,0,1) \pmod 2  \\
bbabbaaa = 1 & \text{if} & (k,i,j) \equiv (1,1,0) \pmod 2  \\
(ba)^4 = 1 & \text{if} & (k,i,j) \equiv (1,1,1) \pmod 2.
\end{array}\right.$$
Hence, the map $\varphi$ is a surjective homomorphism of $J_n$ onto $D_4$.
\end{proof}

Note that the above-mentioned homomorphism $\varphi$ crucially use the relation $(ab)^4=1$ of $D_4$, therefore, it doesn't work for the dihedral group $D_8$, so in the next result we construct a new map onto $D_8$.

\begin{theorem}\label{Map-to-D8}
For $n \geq 2$, there exists a homomorphism $\psi: J_{2n-1} \longrightarrow D_8$ defined on the generators of the new presentation by:
$$\begin{array}{cccccccccccc}
& & \sigma_i & \mapsto & \left\{\begin{array}{llllll}
a & \text{if} \ \ i=n \\
b & \text{if} \ \ i\geq n+1 \ \ \text{and} \ \ i \equiv n+1 \pmod 2 \\
1 & \textrm{otherwise,}
\end{array}\right.
\end{array}$$
where $i=2, 3, \dots, n$. In particular, we obtain a surjective homomorphism of $J_n$ onto $D_8$ via the surjection $J_{n}$ onto $J_{n-1}$ given by: 
$$\begin{array}{cccclcccccc}
q_n & : & J_n & \to & J_{n-1} \\
& & \sigma_i & \mapsto & \left\{\begin{array}{cl}
\sigma_{i-1} & \text{if } i > 2\\
1 & \text{otherwise.}
\end{array}\right.
\end{array}$$
\end{theorem}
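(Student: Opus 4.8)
The plan is to follow the template of Theorem~\ref{Map-to-D4}: verify that $\psi$ is compatible with each of the three families of relations (\ref{Relation-One})--(\ref{Relation-Three}) of the minimal presentation of $J_{2n-1}$, then read off surjectivity, and finally descend to $J_n$ through the maps $q$. Throughout I write $A=\psi(\sigma_k)$, $B=\psi(\sigma_{i+j})$, $C=\psi(\sigma_j)$ and $D=\psi(\sigma_{k-i})$, and I use repeatedly that $a^2=b^2=1$, so that every generator of $J_{2n-1}$ is sent to an involution or to $1$. Relations (\ref{Relation-One}) are then automatic. For (\ref{Relation-Two}), the hypotheses $2\le i\le j$ and $i+j\le k\le 2n-1$ give $2i\le i+j\le 2n-1$, hence $i\le n-1$ and $\psi(\sigma_i)=1$; consequently $\psi(\sigma_k\sigma_i\sigma_k\sigma_j)=A^2C=\psi(\sigma_j)$, whose square is $1$, as needed.

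The crux is relation (\ref{Relation-Three}), which under $\psi$ becomes the identity $ABCB=DCDA$ in $D_8$, and I would organise the verification by the value of $C=\psi(\sigma_j)$. If $C=1$ both sides collapse to $A$, since $ABCB=AB^2=A$ and $DCDA=D^2A=A$. If $C=b$ then $j>n$, which forces $i+j$, $k-i$ and $k$ to exceed $n$ as well; hence $A,B,D\in\{1,b\}$ and the whole identity takes place in the abelian subgroup $\langle b\rangle$, where a one-line computation gives $ABCB=Ab=DCDA$. The only case requiring actual bookkeeping is $C=a$, which occurs precisely when $j=n$; again $i+j$, $k-i$, $k>n$ so $A,B,D\in\{1,b\}$, and I would split on the parity of $i$. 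For $i$ even one has $B=1$ and $k-i\equiv k\pmod 2$, so $D=A$ and both sides equal $Aa$; for $i$ odd one has $B=b$ while $D$ and $A$ are of opposite type, and the two resulting possibilities yield $ab$ (resp.\ $bab$) on each side. This settles that $\psi$ is a well-defined homomorphism.

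Surjectivity is then immediate: $\psi(\sigma_n)=a$, and since $n\ge 2$ the index $n+1$ satisfies $n+1\le 2n-1$ together with $n+1\equiv n+1\pmod 2$, so $\psi(\sigma_{n+1})=b$; thus the image contains $\langle a,b\rangle=D_8$. To obtain a surjection from $J_n$ for every $n\ge 3$, I would check that the shift-and-delete assignment $q_n$ preserves relations (\ref{Relation-One})--(\ref{Relation-Three}) and is therefore a well-defined homomorphism; it is onto because $q_n(\sigma_i)=\sigma_{i-1}$ realises every generator $\sigma_2,\dots,\sigma_{n-1}$ of $J_{n-1}$. For $n$ odd we apply $\psi$ directly, and for $n$ even we precompose $\psi$ with $q_n$; as both factors are surjective, so is the composite. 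I expect the parity analysis of the $C=a$ case to be the only delicate step, while the genuine simplification is that the ordering $j<i+j\le k-i<k$ forces all of $i+j$, $k-i$ and $k$ to exceed $n$ as soon as $j\ge n$, confining the relation to the abelian subgroup $\langle b\rangle$ except when $j=n$.
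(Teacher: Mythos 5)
Your proof is correct and follows essentially the same route as the paper: a direct verification that $\psi$ respects the three relation families of the minimal presentation, followed by the observations that $\psi(\sigma_n)=a$ and $\psi(\sigma_{n+1})=b$ give surjectivity and that precomposing with $q_n$ handles the remaining indices. Your organization of relation (\ref{Relation-Three}) by the value of $\psi(\sigma_j)$, together with the remark that in relation (\ref{Relation-Two}) one always has $i\le n-1$ and hence $\psi(\sigma_i)=1$, is in fact tidier than the paper's longer case-by-case computation, but the underlying argument is the same.
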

\begin{proof}
The relations of the form $\sigma_i^2=1$ are trivially respected by $\psi$. We now consider the relations of type $(\sigma_k\sigma_i\sigma_j\sigma_k)^2=1$ with $4 \leq i+j \leq k \leq 2n-1, ~ 2 \leq i \leq j$. Notice that if $j \geq n$, then $i \leq n-1$, so we have: $$\psi((\sigma_k\sigma_i\sigma_j\sigma_k)^2)=\left\{\begin{array}{lll}
(bba)^2=1 & \text{if} \ \ j=n \ \ \text{and} \ \ k \equiv n+1 \pmod 2\\
a^2=1 & \text{if} \ \ j=n\ \ \text{and} \ \ k \equiv n \pmod 2\\
b^4=1 & \text{if} \ \ j<n \leq k \ \ \text{and} \ \ k \equiv n+1 \pmod 2\\
1 & \text{if} \ \ j<n \leq k \ \ \text{and} \ \ k \equiv n \pmod 2\\
1 & \text{if} \ \ k< n.
\end{array}\right.$$
Lastly, we consider the relations of type $\sigma_k\sigma_{k-i}\sigma_j\sigma_{k-i}\sigma_k\sigma_{i+j}\sigma_j\sigma_{i+j}=1$.
Evidently, if $k<n$ then the relation is preserved trivially under $\psi$.\\
If $k=n$, then $i+j,k-i,j<n$ and again the relation holds.\\
Suppose now that $k=n+1$. Then we have $j<k-i\leq n$, and we obtain: $$\psi(\sigma_k\sigma_{k-i}\sigma_j\sigma_{k-i}\sigma_k\sigma_{i+j}\sigma_j\sigma_{i+j})=b\psi(\sigma_{i+j})^2 b\psi(\sigma_{k-i})^2=1.$$
Finally, suppose that $k >n$. \\
If $j < n$, then as above, $$\psi(\sigma_k\sigma_{k-i}\sigma_j\sigma_{k-i}\sigma_k\sigma_{i+j}\sigma_j\sigma_{i+j})=\psi(\sigma_k)\psi(\sigma_{i+j})^2\psi(\sigma_k)\psi(\sigma_{k-i})^2=\psi(\sigma_k)^2=1.$$
If $j>n$, then $k,i+j,k-i>n$, and $\psi(\sigma_j)$ $\psi(\sigma_k)$, $\psi(\sigma_{i+j})$ and $\psi(\sigma_{k-i})$ are in $\{1,b\}$ and appear an even number of times, so the relation is preserved by $\psi$.\\
Finally, suppose that $j=n$, which implies that $k-i > n$. We have two cases, $i=1$ and $i>1$.\\
If $i=1$, we have:
$$\psi(\sigma_k\sigma_{k-1}\sigma_{i_m}\sigma_{k-1}\sigma_k\sigma_{i_k+1}\sigma_{i_m}\sigma_{i_m+1})=\left\{\begin{array}{lll}
(bab)^2=1 & \text{if} \ \ k \equiv n \pmod 2\\
(bab)^2=1 & \text{if} \ \ k \equiv n+1 \pmod 2.\\
\end{array}\right.$$
If $i>1$, we have:
$$\psi(\sigma_k\sigma_{k-i}\sigma_{i_k}\sigma_{k-i}\sigma_k\sigma_{i_k+i}\sigma_{i_k}\sigma_{i_k+i})=\left\{\begin{array}{lll}
a^2=1 & \text{if} \ \ (k,i) \equiv (n,0) \pmod 2\\
(bba)^2=1 & \text{if} \ \ (k,i) \equiv (n+1,0) \pmod 2\\
(bab)^2=1 & \text{if} \ \ (k,i) \equiv (n,1) \pmod 2\\
(bab)^2=1 & \text{if} \ \ (k,i) \equiv (n+1,1) \pmod 2.
\end{array}\right.$$
Hence, $\psi$ is a homomorphism of $J_n$ onto $D_8$.
\end{proof}

\begin{remark}
The homomorphism $\psi: J_{n} \to D_8$ is well defined if $n \not\equiv 2 \pmod 4$. The obstruction in the case $n \equiv 2 \pmod 4$ is due to the relation $(\sigma_n\sigma_{n/2})^2=1$.
\end{remark}

\begin{remark}\label{D8-not-needed}
Note that in the proof of Theorem \ref{Map-to-D8}, the relation $(ab)^8=1$ of $D_8$ that was required in the proof of Theorem \ref{Map-to-D4} is not needed here. 
\end{remark}
In the following result, we construct another homomorphism of $J_n$ onto $\mathbb{Z}_2 \ast \mathbb{Z}_2$ which will be crucial for the subsequent section. Consider the infinite dihedral group $\mathbb{Z}_2 \ast \mathbb{Z}_2$ with the following presentation:
$$\langle a, b ~|~ a^2=b^2=1 \rangle .$$
\begin{theorem}\label{Map-to-Dinfty}
There exists a surjective homomorphism $\phi: J_n \longrightarrow \mathbb{Z}_2 \ast \mathbb{Z}_2$ given by:
$$\begin{array}{cccccccc}
& & \sigma_i & \mapsto & \left\{\begin{array}{lll}
a(ab)^{n-i} & \text{if} \ \  i \geq \left\lfloor  \frac{n+1}{2}  \right\rfloor\\
1 & \text{otherwise,}
\end{array}\right.
\end{array}$$ where $i = 2,3,\ldots,n.$
\end{theorem}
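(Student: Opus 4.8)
The plan is to use the finite presentation of $J_n$ from Theorem \ref{Minimal-Presentation-Cactus-Rewritten}: it suffices to check that the assignment $\phi$ carries each defining relation \eqref{Relation-One}, \eqref{Relation-Two}, \eqref{Relation-Three} to a trivial relation in $\mathbb{Z}_2\ast\mathbb{Z}_2$, and then to verify surjectivity. To organise the computation I would first set up a ``reflection calculus'' in the infinite dihedral group. Writing $t=ab$, the defining relations give $a t^{m} a = t^{-m}$, so every image $\phi(\sigma_i)=a(ab)^{n-i}=a\,t^{\,n-i}$ (for $i\geq T$, where $T:=\left\lfloor\frac{n+1}{2}\right\rfloor$) is a reflection $r_{m}:=a\,t^{m}$; these satisfy $r_m^2=1$ and $r_p r_q = t^{\,q-p}$, and more generally an even-length product of reflections is a rotation whose exponent is the alternating sum of the indices, while an odd-length product is again a reflection. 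Relation \eqref{Relation-One} is then immediate, since both $1$ and every $r_m$ are involutions.

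For Relation \eqref{Relation-Two} I would exploit the constraints $2\leq i\leq j$ and $i+j\leq k\leq n$, which force $i\leq n/2\leq T$; hence $\phi(\sigma_i)=1$ except possibly at the boundary. Whenever $\phi(\sigma_i)=1$ the word $\sigma_k\sigma_i\sigma_k\sigma_j$ collapses, using $\phi(\sigma_k)^2=1$, to $\phi(\sigma_j)$, whose square is trivial. The one instance demanding care is when $i$ actually attains the threshold (which happens only when $n$ is even, forcing $i=j=n/2$ and $k=n$); here $\phi(\sigma_i)$ is a genuine reflection and the image of the relation must be evaluated directly. I expect this boundary case to be exactly where well-definedness is delicate and parity-dependent, paralleling the obstruction recorded in the remark following Theorem \ref{Map-to-D8}.

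The bulk of the work is Relation \eqref{Relation-Three}. I would first observe that the hypotheses $1\leq i$, $2\leq j$, $i+j<k$ and $i+j\leq k-i$ place the four relevant indices in the monotone order $j< i+j\leq k-i< k$. Since the property ``$\geq T$'' is monotone in the index, only a short list of cases can arise according to how many of these four indices exceed $T$, and in each I would evaluate both sides with the reflection calculus. When all four exceed $T$, the two sides are even products of four reflections, and a direct alternating-sum computation shows that each equals $t^{\,k-2i-j}$; in the intermediate cases the indices below $T$ drop out and the cancellations $r_p r_p=1$ reduce both sides to the single reflection $r_{n-k}$ (or to the identity when $k<T$), so the sides always agree. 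Finally, surjectivity is immediate from $\phi(\sigma_n)=a(ab)^{0}=a$ and $\phi(\sigma_{n-1})=a(ab)^{1}=b$ (valid for $n\geq 3$, since then $n-1\geq T$), as $a$ and $b$ generate $\mathbb{Z}_2\ast\mathbb{Z}_2$. The main obstacle, as indicated, is not the generic computation but the careful threshold and parity analysis confirming that the relations whose indices sit near $\left\lfloor\frac{n+1}{2}\right\rfloor$ are genuinely preserved.
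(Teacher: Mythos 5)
Your overall strategy is the same as the paper's --- verify the three families of relations of Theorem \ref{Minimal-Presentation-Cactus-Rewritten} and read off surjectivity from $\phi(\sigma_n)=a$, $\phi(\sigma_{n-1})=b$ --- and your reflection calculus treats relations (\ref{Relation-One}) and (\ref{Relation-Three}) correctly and rather more transparently than the paper's case-by-case expansion: the alternating-sum computation showing that both sides of (\ref{Relation-Three}) map to $t^{\,k-2i-j}$ when all four indices clear the threshold, and collapse to $r_{n-k}$ (or to $1$) otherwise, is exactly right, as is the observation that the four indices are monotonically ordered so that only a short list of threshold configurations can occur.

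There is, however, a genuine gap, and it sits precisely where you flagged it. For relation (\ref{Relation-Two}) you isolate the boundary case ($n$ even, $i=j=n/2$, $k=n$), announce that ``the image of the relation must be evaluated directly,'' and then never evaluate it. A proof has to resolve this rather than record the expectation that it is delicate --- and if you do carry out the evaluation, with $T=\bigl\lfloor\frac{n+1}{2}\bigr\rfloor=n/2$ you get $\phi(\sigma_{n/2})=a(ab)^{n/2}$ and $\phi(\sigma_n)=a$, hence
$$\phi\bigl((\sigma_n\sigma_{n/2}\sigma_n\sigma_{n/2})^2\bigr)=\bigl((ab)^{n/2}(ab)^{n/2}\bigr)^2=(ab)^{2n}\neq 1$$
in $\mathbb{Z}_2\ast\mathbb{Z}_2$. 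So the relation $(\sigma_n\sigma_{n/2})^4=1$ is \emph{not} preserved, and the assignment as written fails to be a homomorphism for every even $n\geq 4$ (already for $n=4$, the relation $(\sigma_4\sigma_2)^4=1$ is sent to $(ab)^8$). For odd $n$ your argument is complete, since then $i\leq n/2<T$ forces $\phi(\sigma_i)=1$ in every instance of (\ref{Relation-Two}). The repair for even $n$ is to raise the threshold to $\lfloor n/2\rfloor+1$, i.e.\ to additionally send $\sigma_{n/2}\mapsto 1$; with that modification the problematic case disappears and the rest of your case analysis goes through verbatim. You should be aware that the paper's own proof disposes of this case by asserting that $i\geq\bigl\lfloor\frac{n+1}{2}\bigr\rfloor$ forces the other index below the threshold, which is false exactly when $n$ is even and $i=j=n/2$; so your instinct to scrutinize the threshold case was correct, and the statement as printed needs the same correction.
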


\begin{proof}
First, we check the images by $\phi$ of the relations of type $\sigma_i^2=1$. For $i < \left\lfloor  \frac{n+1}{2}  \right\rfloor$ we have $\phi(\sigma_i)^2=1$ by definition, and for $i \geq \left\lfloor  \frac{n+1}{2}  \right\rfloor$, we have:
$$\phi(\sigma_i)^2= a(ab)^{n-i}a(ab)^{n-i}=(ba)^{n-i}(ab)^{n-i}=1.$$
Then we check the image by $\phi$ of the relations of type $(\sigma_k\sigma_i\sigma_j\sigma_k)^2=1$. Without loss of generality, we may suppose that $i \geq j$. By the conditions on the relations given by Theorem \ref{Minimal-Presentation-Cactus-Rewritten}, we know that if $i \geq \left\lfloor  \frac{n+1}{2}  \right\rfloor$, then $j < \left\lfloor  \frac{n+1}{2}  \right\rfloor$. Then: 
$$\phi((\sigma_k\sigma_i\sigma_k\sigma_j)^2)=\left\{\begin{array}{lllll}
(a(ab)^{n-k}a(ab)^{n-i}a(ab)^{n-k})^2=1 & \text{if} \ \ \left\lfloor  \frac{n+1}{2}  \right\rfloor \leq i \\
(a(ab)^{n-k})^4=1 & \text{if} \ \   i<\left\lfloor\frac{n+1}{2}  \right\rfloor \leq k \\
1 & \text{if} \ \ k<\left\lfloor\frac{n+1}{2}  \right\rfloor.
\end{array}\right.$$
Finally, we check the image by $\phi$ of the relations of type $\sigma_k\sigma_{k-i}\sigma_j\sigma_{k-i}\sigma_k\sigma_{i+j}\sigma_j\sigma_{i+j}=1$. Without loss of generality, we may suppose that $k-i \geq i+j$.\\
It is obvious that if $k < \left\lfloor\frac{n+1}{2}  \right\rfloor$ or $k-i < \left\lfloor\frac{n+1}{2}  \right\rfloor$, then the above relation is preserved under the map $\phi$ .\\
If $i+j < \left\lfloor\frac{n+1}{2}  \right\rfloor \leq k-i$, then: $$\begin{array}{lcl}
\phi(\sigma_k\sigma_{k-i}\sigma_j\sigma_{k-i}\sigma_k\sigma_{i+j}\sigma_j\sigma_{i+j}) & = & a(ab)^{n-k}a(ab)^{n+i-k}a(ab)^{n+i-k}a(ab)^{n-k}\\
& = & (ba)^{n-k}(ab)^{n+i-k}(ba)^{n+i-k}(ab)^{n-k} =  1.
\end{array}$$
If $j < \left\lfloor\frac{n+1}{2}  \right\rfloor \leq i+j$, then: $$\begin{array}{lcl}
\phi(\sigma_k\sigma_{k-i}\sigma_j\sigma_{k-i}\sigma_k\sigma_{i+j}\sigma_j\sigma_{i+j}) & = & a(ab)^{n-k}a(ab)^{n+i-k}a(ab)^{n+i-k}a(ab)^{n-k}a(ab)^{n-i-j}a(ab)^{n-i-j}\\
& = & (ba)^{n-k}(ab)^{n+i-k}(ba)^{n+i-k}(ab)^{n-k}(ba)^{n-i-j}(ab)^{n-i-j} = 1.
\end{array}$$
If $\left\lfloor\frac{n+1}{2}  \right\rfloor \leq j$, then:
$$\begin{array}{lcl}
\phi(\sigma_k\sigma_{k-i}\sigma_j\sigma_{k-i}\sigma_k\sigma_{i+j}\sigma_j\sigma_{i+j}) & = & a(ab)^{n-k}a(ab)^{n+i-k}a(ab)^{n-j}a(ab)^{n+i-k}a(ab)^{n-k}a(ab)^{n-i-j}\\
& = & a(ab)^{n-j}a(ab)^{n-i-j}\\
& = & (ba)^{n-k}(ab)^{n+i-k}(ba)^{n-j}(ab)^{n+i-k}(ba)^{n-k}(ab)^{n-i-j}(ba)^{n-j}\\
& = & (ab)^{n-i-j} = 1.
\end{array}$$
Then $\phi$ is a well-defined surjective homomorphism.
\end{proof}

\begin{proof}[Proof of Theorem \ref{Main-Theorem-Quotients}]
The result follows from Theorems \ref{Map-to-D4}, \ref{Map-to-D8}, \ref{Map-to-Dinfty} and Remark \ref{D8-not-needed}.
\end{proof}

\section{Lower central series of cactus groups}\label{Lower-Central-Quotient-Section}
In this section, we make use of the presentation of $J_n$ of Theorem \ref{Minimal-Presentation-Cactus-Rewritten} and the homomorphisms defined in Section \ref{Homomorphisms-Section} to investigate the consecutive quotients of the lower central series of the cactus group. Our goal is to prove Theorems \ref{Lower-Central-Series:DoNotStop/Factors} and \ref{Theorem_J_n/Gamma_3}. We begin by stating some definitions and fundamental results known about the lower central series of a group.\\
The commutator of two elements $g_1$ and $g_2$ of a group $G$ is given by $[g_1,g_2] = g_1^{-1}g_2^{-1}g_1g_2$. Inductively, $$[g_1,g_2, \dots, g_{n}] =[[g_1,g_2, \dots, g_{n-1}], g_{n}]. $$
The lower central series of $G$ is the sequence $\{\Gamma_n(G)\}_{n \in \mathbb{N}}$ given by $$\begin{array}{lcll}
\Gamma_1(G) & = & G, &\\
\Gamma_{n+1}(G)&=&[\Gamma_n(G),G], & \text{ for 
} n\geq 1.
\end{array}$$
Observe that $\Gamma_{n+1}(G) \subset \Gamma_n(G)$, and that $\Gamma_n(G)/\Gamma_{n+1}(G)$ is an Abelian group for every $n\geq 1$. Moreover, it is easy to check that if there exists $n \geq 1$ such that $\Gamma_{n+1}(G)=\Gamma_n(G)$, then for all $i \in \mathbb{N}$, we have $\Gamma_{n+i}(G)=\Gamma_n(G)$. This justifies the definition of nilpotency of a group. A group $G$ is said to be \textit{nilpotent} if there exists $n \geq 1$ such that $\Gamma_n(G)=\{1\}$.\\
We say that the lower central series of a group $G$ {\em stops} at $n$ if $\Gamma_{n+1}(G)=\Gamma_n(G)$ but $\Gamma_{n-1}(G) \neq \Gamma_n(G)$ for $n \geq 1$.
It is obvious that being nilpotent implies that the lower central series stops.\\
 One natural question is to investigate the lower central series of $J_n$. Recall that this problem has been studied in the context of braid groups, see \cite{darne2022lower}.  We prove that the lower central series of $J_n, ~n \geq 3$ does not stop. The case of $J_2 \cong \mathbb{Z}_2$ being finite is not taken into consideration. 

\begin{proof}[Proof of Theorem \ref{Lower-Central-Series:DoNotStop/Factors}]
From Theorem \ref{Minimal-Presentation-Cactus-Rewritten}, we have $J_3 \cong \mathbb{Z}_2 \ast \mathbb{Z}_2$ which is a right-angled Coxeter group. It is not difficult to show that $$\Gamma_n (\mathbb{Z}_2 \ast \mathbb{Z}_2) = \langle (ab)^{2^{n-1}}\rangle$$ for all $n\geq 2$, where $\mathbb{Z}_2 \ast \mathbb{Z}_2 \cong \langle a,b | a^2=b^2=1\rangle$. It then follows that $\mathbb{Z}_2 \ast \mathbb{Z}_2$ is residually nilpotent but not nilpotent. Hence, the lower central series of $J_3$ does not stop. We now consider the case $n \geq 4$ and the new presentation of Theorem \ref{Minimal-Presentation-Cactus-Rewritten}.\\
Consider the group $G = (\mathbb{Z}_2 \ast \mathbb{Z}_2)\times \mathbb{Z}_2$ with the following presentation: $$G=\langle a, b, c ~|~a^2=b^2=c^2=1, ab=ba, ac=ca \rangle.$$ It is well known that the lower central series of a right-angled Coxeter group does not stop. Indeed, as in the previous case, it is not difficult to show that $$\Gamma_n(G)= \langle (cb)^{2^{n-1}} \rangle$$ for all $n \geq 2$.\\
Observe that the following map $$\begin{array}{ccccccccccccc}
\theta & : & J_4 & \longrightarrow & G \\
& & \sigma_2 & \mapsto & a \\
& & \sigma_3 & \mapsto & b \\
& & \sigma_4 & \mapsto & c 
\end{array}$$
is a surjective homomorphism. Indeed, the relations in $J_4$ with the presentation given by Theorem \ref{Minimal-Presentation-Cactus-Rewritten} are of the form: $$\sigma_2^2=\sigma_3^2=\sigma_4^2=(\sigma_4\sigma_2)^4 = (\sigma_4\sigma_3\sigma_2\sigma_3)^2=1$$ and are preserved by $\theta$. Considering the surjective homomorphism $\lambda= q_5 \circ q_6 \circ \dots \circ q_n : J_n \longrightarrow J_4$, which is the composition of maps $ q_i$'s from Theorem \ref{Map-to-D8}, and is given by:
$$\begin{array}{ccccccccc}
\sigma_i & \mapsto & \left\{ \begin{array}{llllllll}
1 & \text{if }  i \leq n-3, \\
\sigma_{i-n+4} & \text{otherwise,}
\end{array}\right.
\end{array}$$ 
we have a surjective homomorphism $\theta \circ \lambda :J_n \to G$ for all $n \geq 4$.\\
Now suppose that there exists $i \geq 1$ such that $\Gamma_i(J_n)/\Gamma_{i+1}(J_n)=\{1\}$. Then the homomorphism $\theta\circ \lambda$ induces a surjective homomorphism $\theta\circ \lambda : \Gamma_i(J_n)/\Gamma_{i+1}(J_n) \to \Gamma_i(G)/\Gamma_{i+1}(G)$ that yields $\Gamma_i(G)/\Gamma_{i+1}(G)=\{1\}$. This implies the lower central series of $G$ stops, a contradiction.
\end{proof}

Now we recall the notion of basic commutators in a group $G$ relative to a given generating set, and the fundamental result by P. Hall regarding the generating set of consecutive quotients of lower central series $\Gamma_i(G)/\Gamma_{i+1}(G)$ via basic commutators. We refer to \cite[Chapter 3]{MR3729243} for a detailed account.\\
Let $G$ be a group generated by the set $X=\{x_1, x_2, \dots, x_k\}$. A \textit{basic commutator} $b_j$ of weight $w(b_j)$ is defined as follows:
\begin{itemize}
\item[(i)] The elements of $X$ are the basic commutators of weight one. We arbitrarily order and relabel them as $b_1, b_2, \dots  ,b_k$ where $b_i < b_j$ if $i < j$.
\item [(ii)] Suppose that we have defined and ordered the basic commutators of weight less than $l > 1$. Then the basic commutators of weight $l$ are $[b_i, b_j]$, where
\begin{itemize}
    \item $b_i$ and $b_j$ are basic commutators and $w(b_i)+w(b_j)=l$; 
    \item $b_i > b_j$; and
    \item if $b_i=[b_s, b_t]$, then $b_j \geq b_t$.
\end{itemize}
\item[(iii)] Basic commutators of weight $l$ come after all basic commutators of weight less than $l$ and are ordered arbitrarily with respect to one another.
\end{itemize}
We now state the basis theorem for the group $\Gamma_i(G)/\Gamma_{i+1}(G)$ which can be found in \cite[Theorem 3.1]{MR3729243}.

\begin{theorem}[P. Hall]\label{Basis-Theorem-Hall}
 Let $G$ be a finitely-generated group with generating set $ \{x_1, x_2, \dots, x_k \}$, and choose $i \in \mathbb{N}$. Then the Abelian group $\Gamma_i(G)/\Gamma_{i+1}(G)$ is generated by the basic commutators of weight $i$. Furthermore, every element of $G$ may be expressed in the (not necessarily unique) form:
$$b_1^{e_1} b_2^{e_2} \dots b_t^{e_t} \Gamma_{i+1}(G),$$
where the $e_j$'s are integers and the $b_j$'s are the basic commutators of weights $1, 2, \dots, i$.
\end{theorem}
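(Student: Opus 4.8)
The plan is to prove both assertions—that $\Gamma_i(G)/\Gamma_{i+1}(G)$ is generated by the basic commutators of weight $i$, and that every group element admits a collected expression modulo $\Gamma_{i+1}(G)$—by means of the classical \emph{collection process}, built on a small stock of commutator identities valid in every group. First I would record the exact identities
$$[a,b]^{-1}=[b,a], \qquad [ab,c]=[a,c]\,[a,c,b]\,[b,c], \qquad [a,bc]=[a,c]\,[a,b]\,[a,b,c],$$
together with the Hall--Witt identity, and observe that each error term produced when one of these is used to interchange two factors is a commutator of strictly greater weight. This is the mechanism by which reordering introduces only higher-weight corrections, and it is precisely what allows all the later bookkeeping to be carried out modulo $\Gamma_{i+1}(G)$.

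Next I would prove, by induction on $i$, that $\Gamma_i(G)$ is generated modulo $\Gamma_{i+1}(G)$ by the left-normed commutators $[x_{j_1},\dots,x_{j_i}]$ of weight exactly $i$ in the generators. The base case $i=1$ is the hypothesis that $\{x_1,\dots,x_k\}$ generates $G$; the inductive step uses $\Gamma_{i+1}(G)=[\Gamma_i(G),G]$ together with the congruences $[ab,c]\equiv[a,c]\,[b,c]$ and $[a,bc]\equiv[a,b]\,[a,c]$ modulo higher weight to reduce an arbitrary generator of $\Gamma_{i+1}(G)$ to a product of weight-$(i+1)$ left-normed commutators in the $x_j$.

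The heart of the argument is a \emph{rewriting lemma}: every commutator of weight $i$, with arbitrary bracketing and arbitrary ordering of its entries, is congruent modulo $\Gamma_{i+1}(G)$ to a product of \emph{basic} commutators of weight $i$. I would establish this by induction on weight, using antisymmetry $[a,b]\equiv[b,a]^{-1}$ to repair the ordering requirement $b_i>b_j$, and the Jacobi identity $[[a,b],c]\,[[b,c],a]\,[[c,a],b]\equiv 1 \pmod{\Gamma_{i+1}(G)}$—a consequence of the Hall--Witt identity—to repair the admissibility condition $b_j\geq b_t$ in the case $b_i=[b_s,b_t]$. Each such repair replaces a forbidden bracketing by admissible ones together with strictly-higher-weight terms that vanish modulo $\Gamma_{i+1}(G)$; combined with the preceding step this yields the generating statement for $\Gamma_i(G)/\Gamma_{i+1}(G)$.

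Finally, for the collected normal form I would run the collection process on an arbitrary word $w$ in the $x_j^{\pm1}$: viewing $w$ as a product of weight-one basic commutators, I repeatedly move the least uncollected basic commutator (in the fixed ordering) to the left, each transposition of an out-of-order adjacent pair $b_s b_t$ being effected by $b_s b_t = b_t b_s\,[b_s,b_t]$, with every newly created higher-weight commutator immediately put into basic form by the lemma. I expect the main obstacle to be exactly the termination and bookkeeping of this process: one must well-order the finitely many basic commutators of each weight $\leq i$, verify that the tuple recording the number of uncollected occurrences of each one strictly decreases under this well-ordering at every step, and check that all commutators of weight $>i$ created along the way lie in $\Gamma_{i+1}(G)$ and may therefore be discarded. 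Pushing this combinatorial descent through cleanly—rather than any single identity—is the delicate point; once it is in place, the surviving collected factors are precisely the basic commutators of weights $1,\dots,i$, which is the asserted form.
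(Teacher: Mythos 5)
The paper does not prove this statement at all: it is quoted as P.~Hall's classical basis theorem with a citation to \cite[Theorem 3.1]{MR3729243}, so there is no in-paper argument to compare yours against. On its own merits, your sketch is the standard collection-process proof and is correct in outline: the commutator expansion identities you record are the right ones, the induction showing that $\Gamma_i(G)$ is generated modulo $\Gamma_{i+1}(G)$ by left-normed weight-$i$ commutators in the $x_j$ is the standard first step, and the rewriting lemma via antisymmetry and the Hall--Witt/Jacobi identity modulo higher weight is exactly how one reduces arbitrary bracketings to basic commutators. One remark: you identify the termination and bookkeeping of the full collection process as the delicate point, but for the statement as given this can be sidestepped entirely. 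Since uniqueness of the exponents is \emph{not} claimed, the second assertion follows from the first by a short downward induction: given $g\in G$, the first assertion with $i=1$ writes $g\equiv b_1^{e_1}\cdots b_k^{e_k} \pmod{\Gamma_2(G)}$, the residual element of $\Gamma_2(G)$ is then a product of powers of weight-$2$ basic commutators modulo $\Gamma_3(G)$, and so on down to weight $i$. The full well-ordered collection argument with its descent on tuples of uncollected occurrences is only needed for the stronger (uniqueness) versions of the theorem, e.g.\ for free groups; for the version stated here your proposed machinery is correct but heavier than necessary.
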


We now prove Parts (i) and (ii) of Theorem \ref{Lower-Central-Series:DoNotStop/Factors} by dividing the computations of  $\Gamma_2(J_n)/\Gamma_3(J_n)$ and $\Gamma_3(J_n)/\Gamma_4(J_n)$ into two subsections. We make use of the definition of basic commutators, the new presentation of $J_n$ given in Theorem \ref{Minimal-Presentation-Cactus-Rewritten}, and the homomorphisms given in Section \ref{Homomorphisms-Section}.\\

\subsection{Computation of $\Gamma_2(J_n)/\Gamma_3(J_n)$}\label{Subsection}
We choose the ordering of the generators of $J_n$ as $$\sigma_2 < \ldots < \sigma_n.$$ Then we know that $\Gamma_2(J_n)/\Gamma_3(J_n)$ is generated by the elements $\{[\sigma_i,\sigma_j], ~ n\geq i>j \geq 2 \}$. This set is not minimal: we can reduce the generating set significantly.
\begin{lemma}\label{lemma2}
Suppose that there exists $(i,j,k) \in [2,n]^3$ such that $$(\sigma_{k}\sigma_{i}\sigma_{k}\sigma_{j})^2=1$$
in $J_n$. Then, $[\sigma_{i},\sigma_{j}]\equiv 1 \pmod {\Gamma_3(J_n)}$.
\end{lemma}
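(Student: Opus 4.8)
The plan is to start from the given relation $(\sigma_{k}\sigma_{i}\sigma_{k}\sigma_{j})^2 = 1$ and rewrite it purely in terms of commutators modulo $\Gamma_3(J_n)$, exploiting that $\Gamma_2(J_n)/\Gamma_3(J_n)$ is abelian and that every $\sigma_i$ squares to $1$. The key elementary identity I would use is that in any group, working modulo $\Gamma_3(G)$, commutators are bilinear and antisymmetric: for $x,y,z \in G$ one has $[xy,z]\equiv [x,z][y,z]$ and $[x,yz]\equiv [x,y][x,z]$ modulo $\Gamma_3(G)$, and $[x,y]\equiv [y,x]^{-1}$. Since $\sigma_i^2 = 1$ gives $\sigma_i^{-1}=\sigma_i$, each generator is its own inverse, which will make the bookkeeping on signs cleaner.

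First I would expand $(\sigma_k\sigma_i\sigma_k\sigma_j)^2$ and reduce it modulo $\Gamma_3(J_n)$ using the involution relations to eliminate the squared generators. Concretely, the word $\sigma_k\sigma_i\sigma_k\sigma_j\sigma_k\sigma_i\sigma_k\sigma_j$ should be simplified by repeatedly moving generators past one another, each transposition contributing a commutator that is central modulo $\Gamma_3(J_n)$. The goal is to collect everything into a product of the basic commutators $[\sigma_a,\sigma_b]$ with their multiplicities. Because the relation equals $1$ in $J_n$, its image in $\Gamma_2(J_n)/\Gamma_3(J_n)$ must also be trivial, so the resulting linear combination of commutators (written additively in the $\mathbb{Z}_2$-vector space $\Gamma_2/\Gamma_3$) must vanish.

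The main computation is to verify that after full reduction the only surviving term is an even multiple of $[\sigma_i,\sigma_j]$ together with terms that already vanish. I expect that the generator $\sigma_k$ appears an even number of times adjacent to each of $\sigma_i$ and $\sigma_j$, so the commutators $[\sigma_k,\sigma_i]$ and $[\sigma_k,\sigma_j]$ cancel in pairs modulo $\Gamma_3(J_n)$, leaving precisely a relation forcing $2[\sigma_i,\sigma_j]\equiv 1$ or, after a more careful count, $[\sigma_i,\sigma_j]\equiv 1$ outright. The subtle point, which I anticipate to be the main obstacle, is keeping track of the interleaving of $\sigma_k$ with both $\sigma_i$ and $\sigma_j$ in the doubled word: one must make sure that the contributions of the form $[\sigma_k,\sigma_i]$ cancel exactly and do not leave a residual odd multiple. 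Since we are in characteristic $2$ (each factor $\mathbb{Z}_2$), an even count suffices to kill a commutator, so the heart of the argument is a parity bookkeeping on how many times each pair is transposed.

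To carry this out cleanly, I would introduce the additive notation $\langle a,b\rangle$ for the class of $[\sigma_a,\sigma_b]$ in $\Gamma_2(J_n)/\Gamma_3(J_n)\cong\mathbb{Z}_2^{m}$, record $\langle a,b\rangle=\langle b,a\rangle$ and $2\langle a,b\rangle=0$, and then simply tally the transpositions needed to untangle $\sigma_k\sigma_i\sigma_k\sigma_j\sigma_k\sigma_i\sigma_k\sigma_j$ into sorted form. The relation $=1$ in the group projects to $0$ in this vector space, and the surviving coefficient of $\langle i,j\rangle$ is odd, which yields $\langle i,j\rangle = 0$, i.e. $[\sigma_i,\sigma_j]\equiv 1 \pmod{\Gamma_3(J_n)}$. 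This reduces the whole lemma to a finite, mechanical verification of one parity count, with no reliance on the specific numerical constraints on $(i,j,k)$ beyond the existence of the stated relation.
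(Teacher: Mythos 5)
Your plan is sound and the parity count you defer does come out the way you predict, but note that this count is the entire content of the lemma and cannot be left as an expectation: had the coefficient of $\langle i,j\rangle$ turned out even, the argument would prove nothing, and your third paragraph explicitly hedges between the two outcomes. Carrying it out: in $w=\sigma_k\sigma_i\sigma_k\sigma_j\sigma_k\sigma_i\sigma_k\sigma_j$, push each $\sigma_i$ one step to the left past the $\sigma_k$ preceding it, using $\sigma_k\sigma_i=\sigma_i\sigma_k[\sigma_k,\sigma_i]$ and the centrality of $[\sigma_k,\sigma_i]$ modulo $\Gamma_3(J_n)$; each of the two swaps creates a $\sigma_k^2=1$, and one gets $w\equiv \sigma_i\sigma_j\sigma_i\sigma_j\,[\sigma_k,\sigma_i]^2\equiv[\sigma_i,\sigma_j]\pmod{\Gamma_3(J_n)}$ since $[\sigma_k,\sigma_i]^2\equiv[\sigma_k^2,\sigma_i]\equiv 1$. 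As $w=1$ in $J_n$, this yields the claim; so the $[\sigma_k,\sigma_i]$ terms cancel in exactly one pair and $[\sigma_i,\sigma_j]$ survives with coefficient $1$, as you anticipated. The paper's proof takes a related but sharper route: multiplying the relation $(\sigma_k\sigma_i\sigma_k\sigma_j)^2=1$ on the left and right by suitable involutions, it shows the relation is \emph{exactly} equivalent, in $J_n$ itself, to the identity $[\sigma_k,\sigma_i,\sigma_j]=[\sigma_i,\sigma_j]$, whence $[\sigma_i,\sigma_j]\in\Gamma_3(J_n)$ because the left-hand side is a weight-three commutator. That version needs no appeal to bilinearity of commutators modulo $\Gamma_3$ and identifies precisely which weight-three element $[\sigma_i,\sigma_j]$ equals, which is mildly stronger; your mod-$\Gamma_3$ bookkeeping buys a more mechanical procedure that would apply verbatim to any length-bounded relation among involutions, at the cost of having to actually perform the inversion count.
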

\begin{proof}
Let $(i,j,k) \in [2,n]^3$. Then: 
$$\begin{array}{lllll}
& & \sigma_{k}\sigma_{i}\sigma_{k}\sigma_{j}\sigma_{k}\sigma_{i}\sigma_{k}\sigma_{j}=1  \Leftrightarrow  \sigma_{i}\sigma_{k}\sigma_{i}\sigma_{k}\sigma_{j}\sigma_{k}\sigma_{i}\sigma_{k}=\sigma_{i}\sigma_{j} \\& \Leftrightarrow & \sigma_{j}\sigma_{i}\sigma_{k}\sigma_{i}\sigma_{k}\sigma_{j}\sigma_{k}\sigma_{i}\sigma_{k}\sigma_{i}=\sigma_{j}\sigma_{i}\sigma_{j}\sigma_{i} \Leftrightarrow  [\sigma_k, \sigma_i, \sigma_j]=[\sigma_{i},\sigma_{j}].
\end{array}$$
So $[\sigma_i,\sigma_j] \in \Gamma_3(J_n)$, which concludes.
\end{proof}
\begin{corollary}\label{Trivial-Two-Commutators-I}
Let $i+j \leq n$. Then: $$[\sigma_{i},\sigma_{j}] \equiv 1 \pmod {\Gamma_3(J_n)}.$$
\end{corollary}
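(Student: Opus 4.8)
The plan is to apply Lemma \ref{lemma2} directly, so the only real work is to exhibit, for the given pair $(i,j)$ with $i+j\leq n$, a third index $k\in[2,n]$ such that the relation $(\sigma_k\sigma_i\sigma_k\sigma_j)^2=1$ holds in $J_n$. Once such a $k$ is produced, Lemma \ref{lemma2} immediately yields $[\sigma_i,\sigma_j]\equiv 1\pmod{\Gamma_3(J_n)}$, which is exactly the claim.

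First I would recall the precise form of Relation \eqref{Relation-Two} from Theorem \ref{Minimal-Presentation-Cactus-Rewritten}: the relation $(\sigma_k\sigma_i\sigma_k\sigma_j)^2=1$ is a defining relation whenever $4\leq i+j\leq k\leq n$ and $2\leq i\leq j$. So the natural candidate is to take $k=i+j$ (or any value of $k$ with $i+j\leq k\leq n$, which exists precisely because we assume $i+j\leq n$). I would first reduce to the case $2\leq i\leq j$ by symmetry, noting that $[\sigma_i,\sigma_j]$ and $[\sigma_j,\sigma_i]$ differ only by inversion modulo $\Gamma_3(J_n)$ (since $\Gamma_2/\Gamma_3$ is abelian), so vanishing of one gives vanishing of the other. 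With $2\leq i\leq j$ and $i+j\leq n$, choosing $k=i+j$ satisfies $4\leq i+j=k\leq n$, and Relation \eqref{Relation-Two} applies verbatim.

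The one subtlety to address is the lower range of the indices: Relation \eqref{Relation-Two} requires $i+j\geq 4$ and $i\geq 2$. Since we are assuming $i,j\geq 2$, we automatically have $i+j\geq 4$, so the hypothesis $4\leq i+j$ is met. Thus for every admissible pair the value $k=i+j$ lands in the legitimate range of Relation \eqref{Relation-Two}, the relation holds in $J_n$, and Lemma \ref{lemma2} applies with this choice of triple $(i,j,k)$.

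The only genuinely delicate point — hardly an obstacle — is the symmetry reduction: one must confirm that $[\sigma_i,\sigma_j]\in\Gamma_3(J_n)$ is equivalent to $[\sigma_j,\sigma_i]\in\Gamma_3(J_n)$, so that Lemma \ref{lemma2}, which is stated for a general triple in $[2,n]^3$, can be invoked regardless of whether $i\leq j$ or $i>j$. This follows from the standard identity $[\sigma_j,\sigma_i]=[\sigma_i,\sigma_j]^{-1}$ modulo $\Gamma_3(J_n)$, valid because the quotient $\Gamma_2(J_n)/\Gamma_3(J_n)$ is abelian. Everything else is a direct citation of the already-established Relation \eqref{Relation-Two} and Lemma \ref{lemma2}, so no lengthy computation is required.
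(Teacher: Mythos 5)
Your proposal is correct and follows exactly the paper's route: the paper's proof is the one-line observation that the claim is an immediate consequence of Relation \eqref{Relation-Two} together with Lemma \ref{lemma2}, which is precisely what you carry out (your choice $k=i+j$, the check $i+j\geq 4$, and the symmetry reduction via $[\sigma_j,\sigma_i]=[\sigma_i,\sigma_j]^{-1}$ are the details the paper leaves implicit).
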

\begin{proof}
It is an obvious consequence of the relation (\ref{Relation-Two}) in Theorem \ref{Minimal-Presentation-Cactus-Rewritten} and of Lemma \ref{lemma2}.
\end{proof}
\begin{lemma}\label{lemma3}
Let $2 \leq i,j \leq n$. If $i \equiv j \pmod 2$, then: $$[\sigma_{i},\sigma_{j}]\equiv 1 \pmod {\Gamma_3(J_n)}.$$
\end{lemma}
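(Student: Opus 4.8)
The plan is to obtain the remaining commutators $[\sigma_i,\sigma_j]$ with $i\equiv j \pmod 2$ directly from Relation (\ref{Relation-Three}), since Corollary \ref{Trivial-Two-Commutators-I} (through Lemma \ref{lemma2}) already disposes of every case with $i+j\le n$. First I would record the two structural facts about the class-two quotient $J_n/\Gamma_3(J_n)$ that make the computation routine: every $[\sigma_a,\sigma_b]$ is central modulo $\Gamma_3(J_n)$, and it has order at most $2$ there, because $\sigma_a^2=1$ gives $1=[\sigma_a^2,\sigma_b]\equiv[\sigma_a,\sigma_b]^2 \pmod{\Gamma_3(J_n)}$. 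Consequently these commutators are symmetric, $[\sigma_a,\sigma_b]\equiv[\sigma_b,\sigma_a]$, and the commutator bracket is multiplicative in each slot modulo $\Gamma_3(J_n)$.

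By this symmetry it suffices to treat $j<i$, the case $i=j$ being trivial. Given $2\le j<i\le n$ with $i\equiv j\pmod 2$, set $m=(i+j)/2$, an integer lying in $[3,n-1]$. I would instantiate Relation (\ref{Relation-Three}) at its parameters $(I,J,K)=\bigl((i-j)/2,\,j,\,i\bigr)$; then $I+J=m=K-I$, so this is precisely the boundary instance $i+j=k-i$ of the relation, and one checks that all the constraints $I\ge 1$, $J\ge 2$, $3\le I+J<K\le n$, $I+J\le K-I$ hold. Because $\sigma_{K-I}=\sigma_{I+J}=\sigma_m$, the relation collapses to
\[
\sigma_i\,\sigma_m\,\sigma_j\,\sigma_m=\sigma_m\,\sigma_j\,\sigma_m\,\sigma_i,
\]
that is, $\sigma_i$ commutes with $\sigma_m\sigma_j\sigma_m$ already in $J_n$.

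Reducing this identity modulo $\Gamma_3(J_n)$ then finishes the argument: using multiplicativity in the second slot together with the order-two fact for $[\sigma_i,\sigma_m]$,
\[
1\equiv[\sigma_i,\sigma_m\sigma_j\sigma_m]\equiv[\sigma_i,\sigma_m]^2\,[\sigma_i,\sigma_j]\equiv[\sigma_i,\sigma_j]\pmod{\Gamma_3(J_n)}.
\]
The one step demanding care is this reduction, where one must justify expanding the relator inside $\Gamma_2(J_n)/\Gamma_3(J_n)$ — equivalently, collecting the eight-letter word $\sigma_i\sigma_m\sigma_j\sigma_m\sigma_i\sigma_m\sigma_j\sigma_m$ and verifying that, modulo $\Gamma_3(J_n)$, every pairwise commutator other than $[\sigma_i,\sigma_j]$ occurs an even number of times and hence cancels (here each generator appears an even number of times, so the ``sorted'' part is trivial). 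This collection, in which the order-two property is used decisively, is the main obstacle, and it is only a routine one; everything else is bookkeeping on the index inequalities.
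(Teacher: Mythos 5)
Your proposal is correct and follows essentially the same route as the paper: the paper also instantiates Relation (\ref{Relation-Three}) at the boundary case $k-i=i+j$ with parameters $\bigl((i-j)/2,\,j,\,i\bigr)$, so that both sides involve $\sigma_m$ with $m=(i+j)/2$ and the relation collapses to $(\sigma_m\sigma_j\sigma_m\sigma_i)^2=1$. The only cosmetic difference is the last step, where the paper simply invokes Lemma \ref{lemma2} while you re-derive its content inline via bilinearity of the commutator modulo $\Gamma_3(J_n)$.
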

\begin{proof} If $j=i$, then the claim is clearly true. 
Without loss of generality, we assume that $i>j$ and that $i-j=2l$, where  $l \geq 1$. Note that 
$$ 3 \leq i-l=j+l < j \leq n, ~ i-l-j = l \geq 1, \text{ and } j \geq 2,$$ so the triple $(i,i-l,j)$ satisfies the inequalities of relation (\ref{Relation-Three}). Hence, we have the relation: $$\sigma_{i}\sigma_{i-l}\sigma_{j}\sigma_{i-l}=\sigma_{j+l}\sigma_{j}\sigma_{j+l}\sigma_{1,i}.$$
However, $i-l=j+l$, so $(\sigma_{j+l}\sigma_{j}\sigma_{j+l}\sigma_{i})^2=1$, and the result follows from Lemma \ref{lemma2}.
\end{proof}
\begin{lemma}\label{lemma4}
For all $2\leq j \leq i\leq n-2$, we have: $$[\sigma_{i}, \sigma_{j}]\equiv [\sigma_{i+2},\sigma_{j}] \pmod {\Gamma_3(J_n)}.$$
\end{lemma}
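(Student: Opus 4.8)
The plan is to carry out the entire computation inside the abelian group $\Gamma_2(J_n)/\Gamma_3(J_n)$, extracting one bilinear identity from Relation (\ref{Relation-Three}) and then applying it to three carefully chosen triples. First I would record two elementary facts valid modulo $\Gamma_3(J_n)$. Since every $\sigma_a$ is an involution, $1=[\sigma_a^2,\sigma_b]\equiv[\sigma_a,\sigma_b]^2\pmod{\Gamma_3(J_n)}$, so each commutator has order at most two in $\Gamma_2(J_n)/\Gamma_3(J_n)$; in particular $[\sigma_a,\sigma_b]\equiv[\sigma_b,\sigma_a]$. As commutators are also central modulo $\Gamma_3(J_n)$, the collecting identity $\sigma_a\sigma_b\sigma_a\equiv\sigma_b[\sigma_b,\sigma_a]\pmod{\Gamma_3(J_n)}$ holds and will be my main computational tool.

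Next I would reduce both sides of Relation (\ref{Relation-Three}) modulo $\Gamma_3(J_n)$. Writing the relation as $\sigma_c\sigma_{a+b}\sigma_b\sigma_{a+b}=\sigma_{c-a}\sigma_b\sigma_{c-a}\sigma_c$ (so that $a,b,c$ play the roles of $i,j,k$ in (\ref{Relation-Three})), the left-hand side $\sigma_c\sigma_{a+b}\sigma_b\sigma_{a+b}$ collapses to $\sigma_c\sigma_b[\sigma_b,\sigma_{a+b}]$, while the right-hand side $\sigma_{c-a}\sigma_b\sigma_{c-a}\sigma_c$ collapses to $\sigma_c\sigma_b[\sigma_b,\sigma_c][\sigma_b,\sigma_{c-a}]$. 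Cancelling the common prefix $\sigma_c\sigma_b$ yields the key identity
\[
[\sigma_b,\sigma_{a+b}]\equiv[\sigma_b,\sigma_c]\,[\sigma_b,\sigma_{c-a}]\pmod{\Gamma_3(J_n)},
\]
valid for every triple $(a,b,c)$ satisfying the hypotheses $3\le a+b<c\le n$, $1\le a$, $2\le b$, $a+b\le c-a$ of (\ref{Relation-Three}).

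To establish $[\sigma_i,\sigma_j]\equiv[\sigma_{i+2},\sigma_j]$ for $2\le j\le i\le n-2$ I would then split into three cases. If $i=j$ the claim is immediate: $[\sigma_j,\sigma_j]=1$ and $[\sigma_{j+2},\sigma_j]\equiv1$ by Lemma \ref{lemma3}, since $j+2\equiv j\pmod 2$. If $i=j+1$, I apply the key identity to $(a,b,c)=(1,j,j+3)$, giving $[\sigma_j,\sigma_{j+1}]\equiv[\sigma_j,\sigma_{j+3}]\,[\sigma_j,\sigma_{j+2}]$; as $[\sigma_j,\sigma_{j+2}]\equiv1$ by Lemma \ref{lemma3}, this is exactly $[\sigma_{i},\sigma_j]\equiv[\sigma_{i+2},\sigma_j]$. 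If $i\ge j+2$, I apply the key identity to $(a,b,c)=(2,j,i+2)$, obtaining $[\sigma_j,\sigma_{j+2}]\equiv[\sigma_j,\sigma_{i+2}]\,[\sigma_j,\sigma_i]$, and again $[\sigma_j,\sigma_{j+2}]\equiv1$ together with the order-two property yields the result.

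The main obstacle is precisely the bookkeeping of the last step: for each range of $i$ I must produce a \emph{legitimate} instance of (\ref{Relation-Three}) in which one of the three commutators vanishes by Lemma \ref{lemma3} while the remaining two are $[\sigma_i,\sigma_j]$ and $[\sigma_{i+2},\sigma_j]$, and then verify all the inequalities $3\le a+b<c\le n$ and $a+b\le c-a$. One checks that the constraint $c\le n$ translates to $j+3\le n$ in the case $i=j+1$ and to $i+2\le n$ in the case $i\ge j+2$, so the hypothesis $i\le n-2$ is exactly what guarantees admissibility; the other inequalities are routine.
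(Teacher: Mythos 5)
Your proof is correct and follows essentially the same route as the paper: both reduce an instance of Relation (\ref{Relation-Three}) modulo $\Gamma_3(J_n)$ and then invoke Lemma \ref{lemma3} to kill the term $[\sigma_j,\sigma_{j+2}]$; your ``key identity'' $[\sigma_b,\sigma_{a+b}]\equiv[\sigma_b,\sigma_c][\sigma_b,\sigma_{c-a}]$ is exactly the mod-$\Gamma_3$ content of the relation instance the paper uses (your choice $(a,b,c)=(2,j,i+2)$ is the mirror orientation of the paper's $(i-j,j,i+2)$, which the paper's Theorem \ref{Minimal-Presentation-Cactus-Rewritten} proof already showed to be equivalent). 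The only difference is organizational: you abstract the congruence once and check admissibility of the listed instances case by case, while the paper computes directly with a single instance.
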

\begin{proof}
If $j=i$, then the result holds by Lemma \ref{lemma3}. So assume that $j \neq i,$ $3 \leq i < i+2 \leq n,~ i-j \geq 1 \text{ and } j \geq 2.$\\
By relation (\ref{Relation-Three}), we have
$\sigma_{i+2}\sigma_{i}\sigma_{j}\sigma_{i}=\sigma_{j+2}\sigma_{j}\sigma_{j+2}\sigma_{i+2}$, which is equivalent to: $$\sigma_{i}\sigma_{j}\sigma_{i}\sigma_{i+2}=\sigma_{i+2}\sigma_{j+2}\sigma_{j}\sigma_{j+2}.$$ Therefore, we obtain:
$$\begin{array}{lcl}
[\sigma_{i},\sigma_{j}][\sigma_{i+2},\sigma_{j}]^{-1} &=& \sigma_{i}\sigma_{j}\sigma_{i}\sigma_{j}\sigma_{j}\sigma_{i+2}\sigma_{j}\sigma_{i+2}\\
&=& \sigma_{i+2}\sigma_{j+2}\sigma_{j}\sigma_{j+2}\sigma_{i+2}\sigma_{i+2}\sigma_{j}\sigma_{i+2}\\
&=& \sigma_{i+2}\sigma_{j+2}\sigma_{j}\sigma_{j+2}\sigma_{j}\sigma_{i+2}= \sigma_{i+2}[\sigma_{j+2},\sigma_{j}]\sigma_{i+2}.
\end{array}$$
The result now follows from Lemma \ref{lemma3}.
\end{proof}
\begin{remark}
From Corollary \ref{Trivial-Two-Commutators-I}, for $2i+1\leq n$, we have $[\sigma_{i+1},\sigma_{i}] \equiv 1 \pmod{\Gamma_3(J_n)}$. It is well known that the quotient $\Gamma_2(J_n)/\Gamma_3(J_n)$ is generated by basic commutators of weight $2$, and therefore has the presentation $$\left\langle [\sigma_{i+1},\sigma_{i}], \ \  \left\lfloor  \frac{n+1}{2}  \right\rfloor \leq i \leq n-1 | [\sigma_{i+1},\sigma_{i}]^2=1, \ \ [[\sigma_{i+1},\sigma_{i}],[\sigma_{j+1},\sigma_{j}]]=1\right\rangle.$$
It is easy to check that $\left|\left[ \left\lfloor \frac{n+1}{2}  \right\rfloor,n-1\right]\right|=\left\lfloor \frac{n}{2} \right\rfloor$, so we have shown that $\Gamma_2(J_n)/\Gamma_3(J_n)$ is a subgroup of $\mathbb{Z}_2^{\left\lfloor \frac{n}{2} \right\rfloor}$. We now prove that it is isomorphic to $\mathbb{Z}_2^{\left\lfloor \frac{n}{2} \right\rfloor}$.
\end{remark}

\begin{proof}[Proof of Theorem \ref{Lower-Central-Series:DoNotStop/Factors} (i)]

From Theorem \ref{Map-to-D4}, we have a surjective homomorphism $\varphi : J_n \to D_4$ given by:
$$\begin{array}{cccclcc}

& & \sigma_{i} & \mapsto & \left\{\begin{array}{lll}
1 & \text{if} & i < \left\lfloor \frac{n+1}{2} \right\rfloor \\
a & \text{if} & i \equiv 0 \pmod 2 \ \ \text{and} \ \ i \geq \left\lfloor \frac{n+1}{2} \right\rfloor \\
b & \text{if} & i \equiv 1 \pmod 2 \ \ \text{and} \ \ i \geq \left\lfloor \frac{n+1}{2} \right\rfloor,
\end{array}\right.
\end{array}$$ where $D_4 \cong \langle a,b ~|~a^2=b^2=(ab)^4=1\rangle.$\\

We first recall the lower central series of $D_4$. Since the group $D_4$ is the set $\{1, a, b, ab, aba, (ab)^2, ba, bab \}$ it is easy to check that $\Gamma_2(D_4) = \langle [x,y] ~|~ (x,y) \in D_4^2 \rangle =\{1,(ab)^2\}=\langle (ab)^2 \rangle$ and $\Gamma_3(D_4) = \langle [x,y] ~|~  (x,y) \in \Gamma_2(D_4) \times D_4 \rangle = \{1,[(ab)^2,y], \ y \in D_4\}=\{1\}$.\\

The homomorphism $\varphi$ induces a homomorphism $\widetilde{\varphi} : \Gamma_2(J_n)/\Gamma_3(J_n) \to \Gamma_2(D_4)/\Gamma_3(D_4)$, and for all $i \in [\left\lfloor \frac{n+1}{2} \right\rfloor, n ]$ we have $\widetilde{\varphi}([\sigma_{i+1},\sigma_{i}])=(ab)^2 \neq 1$ in $\Gamma_2(D_4)/\Gamma_3(D_4)$. Therefore, the generators $[\sigma_{i+1},\sigma_{i}]$ for $i \in [\left\lfloor \frac{n+1}{2} \right\rfloor, n ]$ are all non-trivial.\\
It now remains to show that there are no other relations between these generators other than commutation. To see this, it suffices to prove that: $$[\sigma_{i_1+1},\sigma_{i_1}]^{\epsilon_1}\cdots [\sigma_{i_m+1},\sigma_{i_m}]^{\epsilon_m} \not\equiv 1 \pmod{\Gamma_3(J_n)}$$ for any $\left\lfloor \frac{n+1}{2} \right\rfloor\leq i_1,\ldots, i_m \leq n$ and $(\epsilon_1,\ldots \epsilon_m)\in \mathbb{Z}^m$.\\
Suppose on the contrary that there exist $\left\lfloor \frac{n+1}{2} \right\rfloor\leq i_1,\ldots, i_m \leq n$ and $(\epsilon_1,\ldots \epsilon_m)\in \mathbb{Z}^m$ such that $$[\sigma_{i_1+1},\sigma_{i_1}]^{\epsilon_1}\cdots [\sigma_{i_m+1},\sigma_{i_m}]^{\epsilon_m} \equiv 1 \pmod{\Gamma_3(J_n)}.$$ Since, $\Gamma_2(J_n)/\Gamma_3(J_n)$ is an Abelian group and the $[\sigma_{i+1},\sigma_{i}]$ are involutions in the quotient, we may suppose that $i_1 < \ldots < i_m$ and $\epsilon_l=1$ for all $1\leq l \leq m$. Then, there exists $g \in \Gamma_3(J_n)$, such that $[\sigma_{i_1},\sigma_{i_1+1}]\cdots [\sigma_{i_m},\sigma_{i_m+1}]= g$. \\
Since $i_m > i_l \geq \left\lfloor \frac{n+1}{2} \right\rfloor$, for all $1 \leq l \leq m-1$, the canonical inclusion $J_n \subset J_{2i_m}$ is an injective homomorphism \cite{bellingeri2022cactus} and $[\sigma_{i_l+1},\sigma_{i_l}] \equiv 1 \pmod{\Gamma_3(J_{2i_m})}$, for all $1 \leq l \leq m-1$ from Corollary \ref{Trivial-Two-Commutators-I}. Then there exists $\widetilde{g} \in \Gamma_3(J_{2i_m})$ such that $[\sigma_{i_m+1},\sigma_{i_m}]=\widetilde{g}g \equiv 1 \pmod{\Gamma_3(J_{2i_m})}$. But, from what precedes, and since $i_m \geq \left\lfloor \frac{i_m+1}{2} \right\rfloor$, we have $[\sigma_{i_m+1},\sigma_{i_m}] \not\equiv 1 \pmod{\Gamma_3(J_{2i_m})}$, and therefore $[\sigma_{i_1+1},\sigma_{i_1}]\cdots [\sigma_{i_m+1},\sigma_{i_m}] \not\equiv 1\pmod{\Gamma_3(J_n)}$.\\
This concludes the proof of the fact that $\Gamma_2(J_n)/\Gamma_3(J_n) \cong \mathbb{Z}_2^{\left\lfloor \frac{n}{2} \right\rfloor}.$
\end{proof}

\subsection{Computation of $\Gamma_3(J_n)/\Gamma_4(J_n)$}
\begin{proof}[Proof of Theorem \ref{Lower-Central-Series:DoNotStop/Factors}(ii)]
We prove that the elements $[\sigma_{i+1},\sigma_{i},\sigma_{i+1}]$, with $\left\lfloor  \frac{n+1}{2}  \right\rfloor \leq i \leq n-1$, and $[\sigma_{i+1},\sigma_{i},\sigma_{i+2}]$, with $\left\lfloor  \frac{n+1}{2}  \right\rfloor \leq i \leq n-2$, form a minimal generating set of $\Gamma_3(J_n)/\Gamma_4(J_n)$.
From the computations in Subsection \ref{Subsection}, it follows that $\Gamma_3(J_n)/\Gamma_4(J_n)$ is generated by the elements $[[\sigma_{i+1},\sigma_{i}],\sigma_k]$, with $\left\lfloor  \frac{n+1}{2}  \right\rfloor+1 \leq i+1 \leq k \leq n$.\\
From the Jacobi identity in $\Gamma_3(J_n)/\Gamma_4(J_n)$ we have: $$[\sigma_{i+1},\sigma_{i},\sigma_k]\cdot[\sigma_{i},\sigma_k,\sigma_{i+1}]\cdot [\sigma_k,\sigma_{i+1},\sigma_{i}]=1.$$
If $k \equiv i \pmod 2$, then from Lemma \ref{lemma3}, $[\sigma_i,\sigma_k]$ is an element of $\Gamma_3(J_n)$ and so $[[\sigma_i,\sigma_k],\sigma_{i+1}]$ is an element of $\Gamma_{4}(J_n)$. So the above identity becomes: $$[\sigma_{i+1},\sigma_{i},\sigma_k]\cdot[\sigma_{k},\sigma_{i+1},\sigma_{i}]=1.$$
Now, from Lemma \ref{lemma4}, we have $[\sigma_{k},\sigma_{i+1}]= [\sigma_{i+2},\sigma_{i+1}]$, and the Jacobi identity becomes: $$[\sigma_{i+1},\sigma_{i},\sigma_k]\cdot[\sigma_{i+2},\sigma_{i+1},\sigma_i]=1.$$
Finally, since we also have the relation: $$[\sigma_{i+1},\sigma_{i},\sigma_{i+2}]\cdot[\sigma_{i+2},\sigma_{i+1},\sigma_i]=1$$ from the Jacobi identity, we conclude that: $$[\sigma_{i+1},\sigma_{i},\sigma_k]=[\sigma_{i+1},\sigma_{i},\sigma_{i+2}]$$ in $\Gamma_3(J_n)/\Gamma_4(J_n)$.\\
If $k \equiv i+1 \pmod 2$ then in the same way, we prove that: $$[\sigma_{i+1},\sigma_{i},\sigma_k]=[\sigma_{i+1},\sigma_{i},\sigma_{i+1}]$$ in $\Gamma_3(J_n)/\Gamma_4(J_n)$.\\
Finally, observe that: $$[\sigma_{i+1},\sigma_{i},\sigma_{i+1}]=[\sigma_{i+1},\sigma_{i},\sigma_{i}]=(\sigma_i\sigma_{i+1})^4.$$
Then $\Gamma_3(J_n)/\Gamma_4(J_n)$ is generated by the elements $[\sigma_{i+1},\sigma_{i},\sigma_{i+1}]$, with $\left\lfloor  \frac{n+1}{2}  \right\rfloor \leq i \leq n-1$, and $[\sigma_{i+1},\sigma_{i},\sigma_{i+2}]$, with $\left\lfloor  \frac{n+1}{2}  \right\rfloor \leq i \leq n-2$.\\ Now we show the non-trivality of these generators in $\Gamma_3(J_n)/\Gamma_4(J_n)$.\\
By Theorem \ref{Map-to-Dinfty}, there is a surjective homomorphism $\phi: J_n  \to  \mathbb{Z}_2 \ast \mathbb{Z}_2$ given by:
$$\begin{array}{cccclccc}
& & \sigma_i & \mapsto & \left\{\begin{array}{lll}
a(ab)^{n-i} & \text{if} \ \  i \geq \left\lfloor  \frac{n+1}{2}  \right\rfloor\\
1 & \text{otherwise.}
\end{array}\right.
\end{array}$$
Note that for $i \geq \left\lfloor\frac{n+1}{2}  \right\rfloor$, we have:
$$\phi([\sigma_{i+1},\sigma_{i},\sigma_{i+1}]) =\phi([\sigma_{i+1},\sigma_{i},\sigma_{i+2}])=(ba)^4.$$
Since $\Gamma_n(\mathbb{Z}_2 \ast \mathbb{Z}_2)=\langle (ba)^{2^{n-1}} \rangle$, the images of $[\sigma_{i+1},\sigma_{i},\sigma_{i+1}]$ and $[\sigma_{i+1},\sigma_{i},\sigma_{i+2}]$ by $\phi$ are not in $\Gamma_4(\mathbb{Z}_2 \ast \mathbb{Z}_2)$ so that they are non-trivial in $\Gamma_3(J_n)/\Gamma_4(J_n)$.\\
To obtain the desired rank, we prove that these generators are linearly independent.
Suppose on the contrary that there exist $\left\lfloor  \frac{n+1}{2}  \right\rfloor \leq i_1,\ldots,i_m \leq n-1$ and $(\epsilon_1,\epsilon_1',\ldots,\epsilon_m,\epsilon_m')\in \mathbb{Z}^{2m}$, such that: $$[\sigma_{i_1+1},\sigma_{i_1},\sigma_{i_1+1}]^{\epsilon_1}[\sigma_{i_1+1},\sigma_{i_1},\sigma_{i_1+2}]^{\epsilon_1'}\ldots[\sigma_{i_m+1},\sigma_{i_m},\sigma_{i_m+1}]^{\epsilon_m}[\sigma_{i_m+1},\sigma_{i_m},\sigma_{i_m+2}]^{\epsilon_m'} \equiv 1.$$
Since $\Gamma_3(J_n)/\Gamma_4(J_n)$ is an Abelian group and all of its generators are involutions, we may suppose that $i_1 < \ldots < i_m$, and that $\epsilon_j$ and $\epsilon_j'$ are either $0$ or $1$.\\ 
As in the proof of part (i) of Theorem \ref{Lower-Central-Series:DoNotStop/Factors}, we have that $J_n$ injects into $J_{2i_m-1}$, and we see that: $$[\sigma_{i_1+1},\sigma_{i_1},\sigma_{i_1+1}]^{\epsilon_1}=[\sigma_{i_1+1},\sigma_{i_1},\sigma_{i_1+2}]^{\epsilon_1'}= \ldots = [\sigma_{i_m},\sigma_{i_{m-1}},\sigma_{i_{m}+1}]^{\epsilon_{m-1}'}=1$$ in $\Gamma_3(J_{2m-1})/\Gamma_4(J_{2m-1})$.\\
So consider: $$[\sigma_{i_m+1},\sigma_{i_m},\sigma_{i_m+1}]^{\epsilon_m} [\sigma_{i_m+1},\sigma_{i_m},\sigma_{i_m+2}]^{\epsilon_{m'}} \equiv 1 \pmod{\Gamma_4(J_{2i_m-1})}.$$

Now suppose that $\epsilon_m=0$ and $\epsilon_m'=1$. Then $[\sigma_{i_m+1},\sigma_{i_m},\sigma_{i_m+1}]$ belongs to $\Gamma_4(J_{2i_m-1})$. But this is not possible from above.\\
Next we assume that $\epsilon_m=0$ and $\epsilon_m'=1$. Again, it implies that $[\sigma_{i_m+1},\sigma_{i_m},\sigma_{i_m+2}]$ belongs to $\Gamma_4(J_{2i_m-1})$. But again this is not possible from above.\\
Finally, we suppose that $\epsilon_m=1$ and $\epsilon_m'=1$, that is: $$[\sigma_{i_m+1},\sigma_{i_m},\sigma_{i_m+1}] [\sigma_{i_m+1},\sigma_{i_m},\sigma_{i_m+2}]\equiv 1 \pmod{\Gamma_4(J_{2i_m-1})}.$$ We now prove that this does not hold.\\
From Theorem \ref{Map-to-D8}, there is a surjective homomorphism $\psi : J_{2i_m-1}  \to  D_8$ given by:
$$\begin{array}{cccclccccccc}
& & \sigma_l & \mapsto & \left\{\begin{array}{llllll}
a & \text{if} \ \ l=i_m \\
b & \text{if} \ \ l\geq i_m+1 \ \ \text{and} \ \ l \equiv i_m+1 \pmod 2\\
1 & \text{otherwise}.
\end{array}\right.
\end{array}$$
Since $\Gamma_4(D_8)=\{1\}$, we have: $$\psi([\sigma_{i_m+1},\sigma_{i_m},\sigma_{i_m+1}])=[b,a,b]=(ab)^4 \not\equiv 1 \pmod{\Gamma_4(D_8)}$$
and $$\psi([\sigma_{i_m+1},\sigma_{i_m},\sigma_{i_m+2}])=[b,a,1]=1 \pmod{\Gamma_4(D_8)}.$$
Therefore, $[\sigma_{i_m+1},\sigma_{i_m},\sigma_{i_m+1}]$ and $[\sigma_{i_m+1},\sigma_{i_m},\sigma_{i_m+2}]$ do not have the same image in $D_8 / \Gamma_4(D_8)$. Hence, $[\sigma_{i_m+1},\sigma_{i_m},\sigma_{i_m+1}]$ and $[\sigma_{i_m+1},\sigma_{i_m},\sigma_{i_m+2}]$ are distinct in $J_{2i_m-1}$ modulo $\Gamma_4(J_{2i_m-1})$, and therefore in $\Gamma_3(J_{2i_m-1})/\Gamma_4(J_{2i_m-1})$.\\
This concludes the proof of the fact that $\Gamma_3(J_n)/\Gamma_4(J_n) \cong \mathbb{Z}_2^{2\lfloor\frac{n}{2}\rfloor-1}.$
\end{proof}
The table below provides the ranks of small consecutive quotients of the lower central series of $J_n$ computed with GAP using the nq package \cite{nq}. 
\begin{center}\label{table1}
\begin{tabular}{|c||c|c|c|c|c|c|c|c|c|c|c|c|}
  \hline
 $i=$ & $1$ & $2$  & $3$ & $4$ & $5$ & $6$ & $7$ & $8$ & $9$ & $10$ \\
   \hline
      \hline
$J_4$ & $3$  & $2$ & $3$ & $3$ & $4$ & $4$ & $6$ & $7$ & $10$ & $13$ \\

  \hline
$J_5$  & $4$  &$2$  & $3$ & $4$ & $6$ &  $8$ & $12$ & $17$ & $25$ & $36$ \\
  \hline
 $J_6$  & $5$  &$3$  & $4$ & $6$ &  $10$ & $15$ & $26$ & $40$ & $70$ & $114$ \\
  \hline
\end{tabular}\\
\smallskip
The ranks of ${\Gamma_i(J_n)/\Gamma_{i+1}}(J_n)$ for $n=4,5,6$ and $i = 1,2, \dots, 10$.
\end{center}

\subsection{Computation of $J_n/\Gamma_3(J_n)$}
The aim of this subsection is to prove Theorem \ref{Theorem_J_n/Gamma_3}.
\begin{proof}[Proof of Theorem \ref{Theorem_J_n/Gamma_3}]
(i) From the third isomorphism theorem we have the short exact sequence: $$1 \longrightarrow \Gamma_2(J_n)/\Gamma_3(J_n) \longrightarrow J_n/\Gamma_3(J_n) \longrightarrow J_n / \Gamma_2(J_n) \longrightarrow 1.$$
It is known \cite[Proposition 1, p. 139]{MR1472735} that if we have a short exact sequence $$1 \longrightarrow H \longrightarrow G \longrightarrow K \longrightarrow 1$$ with $H=\langle X|R\rangle$ and $K=\langle Y | S \rangle$, then $G=\langle X \sqcup \widetilde{Y}|R\cup \widetilde{S} \cup T\rangle$, where $\widetilde{Y}$, $\widetilde{S}$ and $T$ are defined as follows:
\begin{itemize}
    \item For each $y \in Y$, let $\widetilde{y} \in G$ be a lift of $y$ and let $$\widetilde{Y} = \{\widetilde{y}~|~y\in Y\}.$$
    \item Each $s \in S$ may be written as a word in the elements of $Y$ and their inverses. We may replace each letter $y \in Y$ in $s$ by its chosen lift $ \widetilde{y} \in \widetilde{Y}$, which gives an element $\widetilde{s} \in G$. Since we have a short exact sequence and $s=1$ in $K$, $\widetilde{s}$ belongs to $H$, so we choose a word $w_s$ written in terms of elements of $X$ and their inverses representing $\widetilde{s}$. Then $\widetilde{S}$ is defined as: $$\widetilde{S} = \{\widetilde{s}w_s^{-1}~|~s\in S\}.$$
    \item For each $y \in Y$ and each $x \in X$, the element $\widetilde{y}x\widetilde{y}^{-1}$ is an element of $H$ which may be written as a word $w_{x,y}$ in the elements of $X$ and their inverses. Then $T$ is defined as: $$T = \{\widetilde{y}x\widetilde{y}^{-1}w_{x,y}^{-1}~|~x \in X,~y\in Y\}.$$
\end{itemize}
So by Theorem \ref{Lower-Central-Series:DoNotStop/Factors} (i), and the fact that in $J_n/\Gamma_3(J_n)$, we have: $$\sigma_j[\sigma_{i+1},\sigma_i]\sigma_j = [\sigma_j,\sigma_i][\sigma_{i+1},\sigma_i][\sigma_j,\sigma_i]=[\sigma_{i+1},\sigma_i]$$ for all $\lfloor \frac{n+1}{2}\rfloor \leq i \leq n$ and $2 \leq j \leq n$, we obtain the following: 
$$\begin{array}{lll}
X &=& \left\{[\sigma_{i+1},\sigma_i] ~|~ \lfloor \frac{n+1}{2}\rfloor \leq i \leq n-1 \right\} \\
\widetilde{Y} &=& \{\sigma_i ~|~ 2 \leq i \leq n\}\\
R &=& \left\{ [\sigma_{i+1},\sigma_i]^2,~ ([\sigma_{i+1},\sigma_i][\sigma_{j+1},\sigma_j])^2~|~\lfloor \frac{n+1}{2}\rfloor \leq i,j \leq n-1 \right\} \\
\widetilde{S} &=& \{ \sigma_i^2 \textrm{ for } 2 \leq i \leq n, ~~  \sigma_i\sigma_j\sigma_i\sigma_j[\sigma_{i+1},\sigma_i]~~ \textrm{ if } ~~ j \equiv i+1 \pmod2, ~ \lfloor\frac{n+1}{2}\rfloor \leq i \leq n-1 \textrm{ and } i \leq j \leq n, \\
&& \hspace*{2mm} \sigma_i\sigma_j\sigma_i\sigma_j ~~ \textrm{ if } ~~ j \equiv i \pmod2  \textrm{ or } 2 \leq i < \lfloor\frac{n+1}{2}\rfloor \} \\
T &=& \left\{\sigma_j[\sigma_{i+1},\sigma_i]\sigma_j[\sigma_{i+1},\sigma_i] ~|~ 2 \leq j \leq n \textrm{ and } \lfloor \frac{n+1}{2}\rfloor \leq i \leq n-1 \right\}.
\end{array}$$
Therefore, we can write all the $[\sigma_{i+1},\sigma_i]$ as words in the $\sigma_j$ from the relations in $\widetilde{S}$, the $[\sigma_{i+1},\sigma_i]$ are central by the relations in $T$, and they are involutions by the relations in $R$. Finally, the relations of type (\ref{Relation-Two-Quotient}) and (\ref{Relation-Four-Quotient}) come from the relations in $\widetilde{S}$.\\
For the group $J_4/\Gamma_3(J_4)$, it is not difficult to check that it has the presentation 

$$\langle a_i,~1 \leq i \leq 5 ~|~a_i^2=1,(a_i a_j)^2=1 \text{ for all } i, j \neq 5 \text{ and } a_5 a_1 a_5=a_1 a_3,$$ $$ a_5 a_2 a_5=a_2 a_4, a_5 a_3 a_5=  a_3, a_5 a_4 a_5=a_4 \rangle$$

by the homomorphism given by:
$$a_1 \mapsto \sigma_3 [\sigma_2, \sigma_3] [\sigma_4, \sigma_3], a_2 \mapsto  \sigma_2 [\sigma_3, \sigma_4] [\sigma_2, \sigma_3][\sigma_4, \sigma_3], a_3 \mapsto \sigma_4$$
$$a_4 \mapsto [\sigma_3, \sigma_4] [\sigma_2, \sigma_3][\sigma_4, \sigma_3] \text{ and } a_5 \mapsto [\sigma_3, \sigma_4] [\sigma_2, \sigma_3][\sigma_4, \sigma_3][\sigma_2, \sigma_3][\sigma_4, \sigma_3].$$

Similarly, for the group $J_5/\Gamma_3(J_5)$, we have an equivalent presentation
$$\langle a_i,~1 \leq i \leq 6 ~|~a_i^2=1,(a_i a_j)^2=1 \text{ for all } i, j \neq 5 \text{ and } a_5 a_1 a_5=a_1 a_3, a_5 a_2 a_5=a_2 a_4, a_5 a_3 a_5=  a_3, a_5 a_4 a_5=a_4 \rangle.$$
This concludes the computation of the presentation of $J_n/\Gamma_3(J_n)$.
\end{proof}

\end{document}